\def\fam#1{\widetilde{#1}}
\def\into{\hookrightarrow}
\def\onto{\twoheadrightarrow}
\newcommand{\dlpullback}[1][dl]{\save*!/#1-1.2pc/#1:(-1,1)@^{|-}\restore}
\newcommand{\drpullback}[1][dr]{\save*!/#1-1.2pc/#1:(-1,1)@^{|-}\restore}
\def\rTo{\to}
\def\lTo{\leftarrow}
\newcommand{\grint}[3]{\int^{#1 \in #2} #3}
\newcommand{\leaves}[1]{#1}
\newcommand{\treebialg}{\mathcal{B}}
\newcommand{\efftreebialg}{\mathcal{B}_{\mathrm{eff}}}
\newcommand{\CK}{\mathcal{H}}
\providecommand{\norm}[1]{\left| {#1}\right|}
\newtheorem{lemma}{Lemma.}[section]
\newtheorem{prop}[lemma]{Proposition.}
\newtheorem{thm}[lemma]{Theorem.}
\newtheorem{cor}[lemma]{Corollary.}
\newtheorem{taller}[lemma]{$\!\!$}
\newenvironment{blanko}[1]%
{\begin{taller}{\normalfont\bfseries  #1}\normalfont}%
{\end{taller}}
\newenvironment{blanko*}[1]%
{%
\begin{list}{\bf {#1} }%
{\setlength{\labelsep}{0mm}\setlength{\leftmargin}{0mm}%
\setlength{\labelwidth}{0mm}\setlength{\listparindent}{\parindent}%
\setlength{\parsep}{\parskip}\setlength{\partopsep}{0mm}}%
\item%
}%
{%
\end{list}%
}
\newenvironment{dem}%
{%
\begin{list}{\em Proof. }%
{\setlength{\labelsep}{0mm}\setlength{\leftmargin}{0mm}%
\setlength{\labelwidth}{0mm}\setlength{\listparindent}{\parindent}%
\setlength{\parsep}{\parskip}\setlength{\partopsep}{0mm}}%
\item%
}%
{%
\qed\end{list}%
}
\newenvironment{dem*}[1]%
{%
\begin{list}{\em #1 }%
{\setlength{\labelsep}{0mm}\setlength{\leftmargin}{0mm}%
\setlength{\labelwidth}{0mm}\setlength{\listparindent}{\parindent}%
\setlength{\parsep}{\parskip}\setlength{\partopsep}{0mm}}%
\item%
}%
{%
\qed\end{list}%
}
\def\Id{\text{Id}}
\def\surj{\mathbf S}
\def\nonempt{\mathbf B}
\def\Bij{\mathbf{Bij}}
\def\tr{\mathbf T}
\def\forest{\mathbf F}
\def\treescuts{\mathbf C}
\def\cut{\operatorname{cut}}
\def\fdb{Fa{\`a} di Bruno\xspace}
\def\fdbsymb{{\mathcal F}}
\def\fdbhopf{{\mathcal H}}
\def\onto{\twoheadrightarrow}
\newcommand{\upperstar}{^{\raisebox{-0.25ex}[0ex][0ex]{\(\ast\)}}}
\newcommand{\lowerstar}{_{\raisebox{-0.33ex}[-0.5ex][0ex]{\(\ast\)}}}
\newcommand{\lowershriek}{_!}
\newcommand{\isopil}{\stackrel{\raisebox{0.1ex}[0ex][0ex]{\(\sim\)}}%
			{\raisebox{-0.15ex}[0.28ex]{\(\rightarrow\)}}}
\providecommand{\fat}[1]{\mathbf{#1}}
\providecommand{\kat}[1]{\text{\textbf{\textsl{#1}}}}
\newcommand{\Set}{\kat{Set}}
\newcommand{\Grpd}{\kat{Grpd}}
\newcommand{\TEmb}{\kat{TEmb}}
\newcommand{\N}{\mathbb{N}}
\newcommand{\Q}{\mathbb{Q}}
\newcommand{\C}{\mathbb{C}}
\newcommand{\name}[1]{\ulcorner #1 \urcorner}
\newcommand{\Aut}{\operatorname{Aut}}
\newcommand{\Onedot}{
  \bsegment
    \move (0 0) \fcir f:0 r:2.5
  \esegment
}
\newcommand{\smalldot}{
  \bsegment
    \move (0 0) \fcir f:0 r:1.5
  \esegment
}
\newcommand{\trevert}{
\bsegment 
\move (1 0) \lvec (6 0) \smalldot \lvec (10 4) \move (6 0) \lvec (10 -4)
\esegment
}
\newcommand{\trekant}{
\bsegment \setunitscale{0.8}
\move (1 0) \lvec (6 0) \smalldot \lvec (16 10)
\move (6 0) \lvec (16 -10)
\move (12 6) \smalldot  \lvec (12 -6) \smalldot
\esegment
}
\newcommand{\tovert}{
\bsegment 
\move (1 0) \lvec (6 0) \smalldot \lvec (11 0)
\esegment
}
\newcommand{\tokant}{
\bsegment \setunitscale{0.8}

\move (0 -10) \lvec (0 10)
      \move (0 5) \smalldot
      \move (0 -5) \smalldot
      \move (0 0) \larc r:5 sd:-90 ed:90

\esegment
}
\newcommand{\freeEllipsis}[3]{
    \writeps {
      gsave 
      #3 rotate 
    }
    \lellip rx:#1 ry:#2
    \writeps { 
      grestore
    }
}
\newcommand{\onedot}{
  \bsegment
    \move (0 0) \fcir f:0 r:2
  \esegment
}
  \newcommand{\inlineDotlessTree}{
  \raisebox{-4pt}{
  \begin{texdraw} \linewd 0.5 \bsegment
   \move (0 0) \lvec (0 15) \move (1 0)
 \esegment \end{texdraw} } 
}
  \newcommand{\inlineNullaryTree}{
  \raisebox{-4pt}{
  \begin{texdraw} \linewd 0.5 \bsegment
   \move (0 0) \lvec (0 10) \onedot \move (1 0)
 \esegment \end{texdraw} } 
}
\newcommand{\red}{\writeps{1 0 0 setrgbcolor}}
\title[Fa{\`a} di Bruno for Green functions]
{Groupoids and Fa{\`a} di Bruno formulae for Green functions in bialgebras of trees}
\thanks{The first author was partially supported by grants 
MTM2010-15831, MTM2010-20692, SGR1092-2009,  
the second author by 
MTM2009-10359, 
MTM2010-20692, 
and SGR1092-2009, 
and the third author by
MTM2010-15831, SGR119-2009.}
\author{Imma G\'alvez-Carrillo}
\address{Departament de Matem\`atica Aplicada III
      \\Universitat Polit\`ecnica de Catalunya
      \\Escola d'Enginyeria de Terrassa 
      \\Carrer Colom 1\\08222 Terrassa (Barcelona)\\Spain}
\email{m.immaculada.galvez@upc.edu}
\author{Joachim Kock}
\address{Departament de Matem\`atiques
       \\Universitat Aut\`onoma de Barcelona
       \\08193 Bellaterra (Barcelona), Spain}
\email{kock@mat.uab.cat}
\author{Andrew Tonks}
\address{STORM, London Metropolitan University\\
166--220 Holloway Road, London N7 8DB, UK}
\email{a.tonks@londonmet.ac.uk}
\date{Friday 2013-11-01}                                         
\begin{document}

\begin{abstract}
  We prove a \fdb formula for the Green function in
  the bialgebra of $P$-trees, for any polynomial endofunctor $P$.  The
  formula appears as relative homotopy cardinality of an
  equivalence of groupoids. 
\end{abstract}

\maketitle

\tableofcontents

\section*{Introduction}

This paper is a contribution to the combinatorial understanding of
renormalisation in perturbative quantum field theory.  It can be
seen as part of the general programme, pioneered by Joyal and Baez--Dolan
(and in a sense already by Grothendieck), of gaining insight into
combinatorics, especially regarding symmetries, by upgrading
from finite sets to suitably finite groupoids.  We derive \fdb formulae
in bialgebras of trees by realising them as relative homotopy
cardinalities of equivalences of groupoids.  
An attractive aspect of this approach is that all issues with symmetries are handled
completely transparently by the groupoid formalism, and take care
of themselves throughout the equivalences without appearing in the calculations.  
This is made possible by our novel and consistent use of {\em homotopy sums}.
The general philosophy is that sums weighted by inverses of symmetry 
factors always arise as groupoid cardinalities of homotopy sums.
It is our hope that these kinds of techniques can
be useful more generally in perturbative quantum field theory,
and related areas.

Our starting point is the seminal work of van Suijlekom on Hopf algebras and
renormalisation of gauge field theories \cite{vanSuijlekom:0610},
\cite{vanSuijlekom:0801}, \cite{vanSuijlekom:0807}.  Among several more
important results in his work, the following caught our attention: for
each interaction label $v$ of a quantum field theory, the Connes--Kreimer
Hopf algebra of Feynman graphs contains a formal series $Y_v$ satisfying the
multi-variate `\fdb' formula
\begin{equation}\label{vS:Y}
\Delta(Y_v) = 
\sum_{n_1\cdots n_k} Y_v Y_{v_1}^{n_1}\cdots Y_{v_k}^{n_k} 
\otimes p_{n_1\cdots n_k} (Y_v) ,
\end{equation}
where 
$p_{n_1\cdots n_k}$ is the projection onto graphs containing $n_i$
vertices of type $v_i$.  
The series $Y_v$ is the renormalised (combinatorial) 1PI Green function
$$
Y_v = \frac{G_v}{\prod_{e\in v} \sqrt{G_e}} ,
$$
where
$$
G_v = 1 + \sum_{\operatorname{res}\Gamma=v} \frac{\Gamma}{\norm{\Aut\Gamma}}
$$
is the bare Green function of all connected 1PI graphs $\Gamma$ with residue $v$, 
the product is over the lines of the one-vertex graph $v$, and where
the denominators
$$
G_e = 1 -\sum_{\operatorname{res}\Gamma=e} \frac{\Gamma}{\norm{\Aut\Gamma}}
$$
constitute a renormalisation factor, cf.~the Dyson formula
(see \cite[Ch.~8]{Itzykson-Zuber:QFT}) or \cite[Ch.~7]{Kaku}).
%
%
Van Suijlekom's proof of the formula is a matter of expanding everything,
keeping track of several different combinatorial factors associated to graphs,
and comparing them with the help of the orbit-stabiliser theorem.  (The formula
is Proposition~12 of \cite{vanSuijlekom:0807}, but the bulk of the proof is
contained in various lemmas in \cite{vanSuijlekom:0610} where the 
combinatorial factors involved are computed.)

Interest in Green functions in
Hopf algebras of graphs is due in particular to the fact that,
unlike the individual graphs, the Green
functions actually have a physical interpretation.
The \fdb Hopf algebra plays an important role in Hopf algebra approach
to renormalisation,
and many different relationships between it and the Hopf algebras of graphs or 
trees have been uncovered.
One reason for the importance of the \fdb  Hopf algebra is the general idea,
 expressed for example by Delamotte~\cite{Delamotte}, that in the end renormalisation
should be a matter of reparametrisation, i.e.~substitution of power series.

Already Connes and Kreimer~\cite{Connes-Kreimer:MR1810779}
constructed a Hopf algebra homomorphism from the \fdb Hopf algebra (or 
rather the Connes--Moscovici Hopf algebra) to the Hopf algebra of Feynman 
graphs in the case of $\phi^3$ in six space-time dimensions.
Bellon and 
Schaposnik~\cite{Bellon-Schaposnik:0801.0727} were perhaps the first to
explicitly write down the \fdb formula, in a form
$$
\Delta(a) = \sum_n a^n \otimes a_n ,
$$
very pertinent to the formula we establish in the present paper.
Recently the \fdb formula has been exploited by
Ebrahimi-Fard and Patras~\cite{Ebr_F-Patras} in the development of
exponential renormalisation.  Their paper contains also valuable
information on the relationship with the Dyson formula.

It seems unlikely that a formula like this can exist for the Green function in
the Hopf algebra of {\em trees}
--- indeed, the symmetry factors
of the trees involved are not related to the combinatorics of grafting in the
same way as symmetry factors of graphs are related to insertion of graphs
(except in very special cases, such as considering only
iterated one-loop self-energies in massless Yukawa theory in four dimensions,
an example considered by many authors, e.g.~\cite{Connes-Kreimer:MR1915297}, 
\cite{Broadhurst-Kreimer:hep-th/0012146},
\cite{Kreimer-Yeats:MR2255485}).

In the present paper we work with {\em operadic trees} instead of the
combinatorial trees of the usual (Butcher)--Connes--Kreimer Hopf algebra --- this is an
essential point: operadic trees are more closely related to Feynman graphs, and
have meaningful symmetry factors in this respect,
cf.~\cite{Kock:graphs-and-trees} (see also~\ref{trees-of-graphs} 
below).

Our main theorem (\ref{main-thm-alg}) at the algebraic level
establishes the \fdb formula
\begin{equation}\label{main-formula-intro}
\Delta(G) = \sum_n G^n \otimes p_n(G) 
\end{equation}
for the Green function $G=\sum_T T/\norm{\Aut(T)}$
in the bialgebra of $P$-trees, for any polynomial endofunctor $P$.  

The proof we give is very conceptual:
 the equation
appears as an equivalence of groupoids, and all the symmetry factors
are hidden and take care of themselves.
A few remarks may be in order here to explain how this works.

  A basic construction in combinatorics is to split a set into a disjoint union
  of parts: given a map of sets $E\to B$, the `total space' $E$ is the sum of the
  fibres:
  $$
  E = \sum_{b\in B} E_b .
  $$
  The same formula holds for groupoids, with the appropriate homotopy notions:
  given a map of groupoids
  $E\to B$, there is a natural equivalence
  $$
  E \simeq \grint{b}{B}{E_b} .
  $$
  The integral sign denotes the {\em homotopy sum} of the family (see \ref{hosum})
  (and the fibres are homotopy fibres).
  Up to non-canonical
  equivalence it can be computed as
  $$
  \simeq \sum_{b\in\pi_0B}E_b/\Aut b,
  $$
  revealing the symmetry factors, but our point is that homotopy sums interact
  very nicely with homotopy pullbacks, making the formalism look exactly as if
  we were dealing just with sets, and it is never necessary to mention the symmetry
  factors explicitly.

Our main theorem (\ref{main-thm-grpd}) at the groupoid level states
the following equivalence of groupoids over $\forest\times\tr$:
\begin{equation}\label{main-equiv-intro}
\grint{T}{\tr}{\cut(T)} \simeq \grint{N}{{\fam I}}{\forest_{\leaves{N}} 
\times {}_{\leaves{N}}\tr} ,
\end{equation}
which is essentially a double-counting formula.
Here 
$\cut(T)$ is the discrete groupoid of cuts of a tree, 
$\leaves N$ is an ($I$-coloured) set,
$\forest_{\leaves{N}}$ is the groupoid of forests with root profile $\leaves N$, and
${}_{\leaves{N}}\tr$  is the groupoid of trees with leaf profile $\leaves N$.
More precisely, if $\forest$ and $\tr$ are the groupoids of $P$-forests and $P$-trees, then $\forest_{\leaves{N}}$ and ${}_{\leaves{N}}\tr$ are the {\em homotopy fibres} over $\leaves{N}$ of the root and leaf functors respectively.  
%
The algebraic 
\fdb formula \eqref{main-formula-intro}
is obtained just by taking homotopy cardinality  (relative 
to $\forest\times\tr$)
on both sides of
the equivalence \eqref{main-equiv-intro}.

In order to arrive at a level of abstraction where the arguments become pleasant
and the essential features are in focus, we have moved away quite a bit from the
starting point mentioned above, and at the moment we have not quite succeeded in
deriving van Suijlekom's formula from ours (or conversely).  Depending on the
choice of polynomial endofunctor $P$, our formula specialises to various
formulae of independent interest, such as formulae for planar trees or binary
trees.  Our motivating example of polynomial endofunctor $P$, explained at the
end of the paper, is defined in terms of interaction labels and 1PI graphs for
any quantum field theory.  Via work in progress by the second-named
author~\cite{Kock:graphs-and-trees} establishing a bialgebra homomorphism to
this bialgebra of $P$-trees from the bialgebra of graphs, we hope in subsequent
work to be able to derive van Suijlekom's formula from the \fdb formula of the
present paper.

\begin{blanko*}{Outline of the paper.}

  Section~\ref{sec:clas} and~\ref{sec:cktrees} are mostly motivational.
  We begin in Section~\ref{sec:clas} by revisiting the classical \fdb
  Hopf algebra, gradually recasting it in more categorical language,
  starting with composition of formal power series, then the incidence
  algebra viewpoint (cf.~\cite{DoubiletMR0335568}), then finally the
  category of surjections (cf.~\cite{JoyalMR633783}).  We work with
  the non-reduced {\em bi}algebra rather than with the reduced {\em
    Hopf} algebra. This is an important point.
  In Section~\ref{sec:cktrees} we briefly revisit the (Butcher)--Connes--Kreimer Hopf
  algebra of trees, introduce an operadic version of it that we need, and state
  one version of the main theorem for the bialgebra of operadic trees and the
  corresponding Green function.

  The theory of groupoids is at the same time our main 
  technical tool and the most important conceptual ingredient in our approach.
  Section~\ref{sec:groupoids} recalls a few notions, fixing terminology and 
  notation for homotopy pullbacks, fibres,  quotients and
  sum, in the hope of rendering the paper accessible to
  readers without a substantial background in category theory.
  In Section~\ref{sec:trees} we set up the formalism of operadic trees
  and forests, in terms of polynomial endofunctors, following
  \cite{Kock:0807}.  This formalism is needed in particular to be able
  to talk about decorated trees --- $P$-trees for a polynomial
  endofunctor $P$ --- at the level of generality needed to cover the
  examples envisaged.  

In Section~\ref{sec:fdbequiv} we establish our main result,
the equivalence of groupoids over $\forest\times\tr$:
$$
\grint{T}{\tr}{\cut(T)} \simeq \grint{\leaves{N}}{\fam I}{
  \forest_{N}
\times {}_{\leaves{N}}\tr} 
$$
already mentioned.
Most of the arguments are formal consequences of
general properties of groupoids; the only thing
we need to prove by hand is the equivalence
$$
\treescuts\simeq\forest\times_{\fam I}\tr 
$$
between trees with a cut
and pairs consisting of a forest and a tree such
that the roots of the forest `coincide' with
the leaves of the tree (Lemma~\ref{tree:key}).  In a precise sense,
this is the essence of the 
Hopf algebra of trees.

  Section~\ref{Sec:card} reviews and extends appropriate notions of groupoid
  cardinality, following Baez--Dolan~\cite{Baez-Dolan:finset-feynman} and
  Baez--Hoffnung--Walker~\cite{Baez-Hoffnung-Walker:0908.4305}.  In particular,
  we establish the basic properties of
  relative cardinality with respect to a morphism of groupoids.
  In Section~\ref{fdbalg} we finally prove the
  \fdb formula in the bialgebra of trees by taking cardinality of the
  groupoid equivalence of Section~\ref{sec:fdbequiv}.
 %

Examples of polynomial endofunctors giving rise to several kinds of trees 
are given in Section~\ref{sec:ex}. In particular we relate our \fdb formulae 
with
the classical one.  In our final example we describe a
polynomial endo\-functor $P$ defined in terms of Feynman graphs, which
points towards transferring our results to bialgebras of graphs.
\end{blanko*}

\begin{blanko}{Acknowledgments.}
We are indebted to Kurusch Ebrahimi-Fard for
many illuminating discussions on quantum field theory,
and to the anonymous referee whose comments led to some expository improvements. 
\end{blanko}

\section{The \fdb formula revisited}

\label{sec:clas}

In this section we briefly review the classical \fdb
bialgebra, first in terms of composition of power series,
then in terms of partitions, and finally in terms of the
groupoid of surjections.

\begin{blanko}{Power series and the classical Fa{\`a} di Bruno formula.}
Consider formal power series in one variable without
  constant term and with linear term equal to $z$:
\begin{align*}
f(z)&=\sum_{n=0}^\infty \frac{A_n(f)\,z^n}{n!} \qquad A_0=0,\ A_1=1 .
\end{align*}
These form a group under substitution of power series,
sometimes denoted
$\operatorname{Diff}(\C,0)$, as the series can be regarded as germs of
smooth functions tangent to the identity at $0$.
  The classical \fdb Hopf algebra
$\fdbhopf$ is the polynomial algebra on the symbols
$$
a_n := A_n/n!, \quad n\geq 2,
$$
viewed as linear forms on $\operatorname{Diff}(\C,0)$, 
$$
\langle a_n,f\rangle=a_n(f) = A_n(f)/n!,\qquad a_n\in \C[[z]]^*.
$$
The comultiplication is defined by
\begin{align*}
\langle\Delta (a_n) ,f\otimes g\rangle&=\langle a_n,g\circ f\rangle ,
\end{align*}
and the counit by $ \varepsilon(a_n)=\langle a_n,{\mathbf1}\rangle $.
An explicit formula for $\Delta$ can be obtained by expanding
\begin{align}\label{formalcomp}
(g\circ f)(z)
&=\sum_{n=1}^\infty a_n(g)\biggl(
\sum_{m=1}^\infty a_m(f)\,z^m
\biggr)^{\!\!n},
\end{align}
and involves the Bell polynomials.  So far $\fdbhopf$ is a bialgebra; it
acquires an antipode by general principles by observing that it is a connected
graded bialgebra: the grading is given by
$$
\deg(a_k) = k-1 .
$$
We refer to 
Figueroa and Gracia-Bond\'{\i}a~\cite{Figueroa-GraciaBondia:0408145} for
details on this classical object and its relevance in quantum field theory.

The formula for $\Delta$ can be packaged into a single equation, by
considering the formal series
$$
A = 1+ \sum_{k\geq 2} \frac{A_k}{k!} = 1+ \sum_{k\geq 2} a_k  \quad \in \C[[a_2,a_3,\ldots]] .
$$
The comultiplication extends to series, and now takes the following form:
$$
\Delta(A) = A \otimes 1 + \sum_{k\geq 2} A^k \otimes a_k .
$$
The values of $\Delta$ on the individual generators $a_k$ can be extracted from 
this formula.
\end{blanko}

\begin{blanko}{The (non-reduced) \fdb bialgebra.}
    \label{fdb}
  For our purposes it is important to give up the condition $a_1=1$.  In this
  case, substitution of power series does not form a group but only a monoid,
  and the algebra is just a bialgebra rather than a
  Hopf algebra.  We denote it by $\fdbsymb = \C[a_1,a_2,a_3,\ldots]$.  The
  definition of the comultiplication is still the same, and again it can be
  encoded in a single equation, involving now the formal series
  $$
  A = \sum_{k\geq 1} \frac{A_k}{k!}= \sum_{k\geq 1} a_k  \quad \in \C[[a_1,a_2,a_3,\ldots]].
  $$
  The resulting form of the \fdb formula is the Leitmotiv of the present work:
  \begin{prop}[Classical \fdb identity]\label{fdb-clas}The formal series $A$ satisfies
  $$
  \Delta(A) = \sum_{k\geq 1} A^k \otimes a_k  .
  $$    
  \end{prop}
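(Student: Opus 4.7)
The plan is to verify the identity by pairing both sides against arbitrary test tensors $f\otimes g$, with $f,g$ ranging over formal power series without constant term, and then invoke nondegeneracy of this pairing in the completion $\hat{\fdbsymb}\,\hat\otimes\,\hat{\fdbsymb}$ in which $\Delta(A)$ naturally lives. The key structural input is that for each fixed $f$ the map $\langle-,f\rangle\colon\fdbsymb\to\C$ is an algebra homomorphism, evaluating a monomial $a_{n_1}\cdots a_{n_r}$ to the product $a_{n_1}(f)\cdots a_{n_r}(f)$. Extended to the formal series $A=\sum_k a_k$, this multiplicativity reads $\langle A^k, f\rangle = \langle A, f\rangle^k$.

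With this in hand the computation is mechanical. The right-hand side pairs to
$$
\sum_{k\geq 1}\langle A^k,f\rangle\,\langle a_k,g\rangle \;=\; \sum_{k\geq 1}\langle A,f\rangle^k\,a_k(g).
$$
The left-hand side pairs, by the defining transpose property of $\Delta$, to $\langle A, g\circ f\rangle = \sum_{k\geq 1}a_k(g\circ f)$. Substituting the expansion \eqref{formalcomp} and summing the resulting coefficients over all powers of $z$ (i.e.\ applying $\langle A,-\rangle$ to a formal power series) yields
$$
\sum_{n\geq 1}a_n(g)\Bigl(\sum_{m\geq 1}a_m(f)\Bigr)^{\!n} \;=\; \sum_{n\geq 1}a_n(g)\,\langle A,f\rangle^n,
$$
which matches the right-hand side after relabelling.

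The main obstacle is giving rigorous meaning to these infinite manipulations: $A$ does not lie in $\fdbsymb$ itself, and the informal "summation over all powers of $z$" used above does not literally converge. I would resolve this by exploiting the grading $\deg(a_k)=k-1$, in which the asserted identity lives in $\hat{\fdbsymb}\,\hat\otimes\,\hat{\fdbsymb}$ and at each fixed total bidegree involves only finitely many terms. Concretely, one extracts the coefficient of a single generator $a_k$ in the second tensor slot on both sides: writing $\Delta(a_m) = \sum_{k\leq m} P_{m,k}\otimes a_k$ with $P_{m,k}$ the (partial) Bell polynomials determined by the defining formula, the identity reduces to $\sum_{m\geq k} P_{m,k} = A^k$ in $\hat{\fdbsymb}$, and this follows at each homogeneous degree from the pairing computation above applied to the finitely many relevant monomials in $f$.
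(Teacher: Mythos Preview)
Your argument is correct and is precisely the unfolding of what the paper leaves implicit: the paper does not supply a separate proof of this proposition, treating it as the direct repackaging of the defining expansion \eqref{formalcomp}, and your pairing computation (together with the degree-by-degree reduction to handle the completion) is exactly that. The one point worth tightening is presentational rather than mathematical: rather than first writing the divergent ``sum over all powers of $z$'' and then retroactively justifying it, it is cleaner to fix $N$ from the outset, read off $\Delta(a_N)=\sum_{k\le N} B_{N,k}(a_1,a_2,\dots)\otimes a_k$ from the coefficient of $z^N$ in \eqref{formalcomp}, and then observe that $\sum_{N\ge k} B_{N,k}=A^k$ is literally the decomposition of $A^k$ into homogeneous pieces in $\hat{\fdbsymb}$ --- this is the same argument you give in your final paragraph, just placed up front so that no formal evaluation at $z=1$ ever appears.
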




  We stress that the bialgebra $\fdbsymb$ (with grading $\deg(a_k)=k-1$) is not
  connected: $\fdbsymb_0$ is spanned by the powers of $a_1$, all of which are
  group-like.  One can obtain the classical Hopf algebra $\fdbhopf$ by imposing
  the relation $a_1=1$, which is easily seen to generate a bi-ideal.  
\end{blanko}


\begin{blanko}{Note on grading convention.}
Since $\deg(a_k)=k-1$, it is common in the literature to employ a different 
indexing, shifting the index so that it agrees with the degree.  
With the shifted index convention, the \fdb formula then reads
$$
\Delta(A) = \sum_{n\geq 1} A^{n+1} \otimes a_n .
$$
This is the convention used by 
van Suijlekom and many others, and explains the extra factor $Y_v$ in
the formula~\eqref{vS:Y} quoted above.
Beware that this convention means that certain indices are allowed to
start at $-1$ and when it is said that $p_n(G)$ is the part of the Green function
corresponding to graphs with $n$ vertices, it actually means $n+1$ vertices.

While the shifted indexing convention
can have its advantages, it is important for us to keep the indexing as above, so that the
exponent in $A^k$ matches the index in $a_k$.  As we pass to more
involved \fdb formulae, this will always express a type match: the outputs of
one operation (the exponent) matching the input of the following (the subscript).
%
%
%
%
\end{blanko}

\begin{blanko}{Fa{\`a} di Bruno Hopf algebra in terms of partitions.}
  The coefficients --- the Bell polynomials which we did not make explicit ---
  count partitions.  In fact, it is classical 
  (Doubilet~\cite{DoubiletMR0335568}, 1975) that the Hopf algebra 
  $\fdbhopf$
  can be realised as the reduced {\em incidence bialgebra} of the family of posets
  given by partitions of finite sets.
  
  The partitions of a finite set $S$ form a lattice, in which $\sigma\leq 
  \tau$ when $\sigma$ is a refinement of $\tau$. 
  Consider the family of all intervals $[\sigma,\tau] := \{\rho \mid \sigma\leq \rho 
  \leq \tau \}$ in partition lattices of finite sets, and 
  declare two intervals equivalent if they are isomorphic as abstract posets.
  This is an order-compatible equivalence relation, meaning that the 
  comultiplication formula
  $$
  \Delta([\sigma,\tau]) = \sum_{\rho\in [\sigma,\tau]} [\sigma,\rho]\otimes 
  [\rho,\tau]
  $$
  is well-defined on equivalence classes.  Disjoint union of finite sets defines
  furthermore a multiplication on these equivalence classes.  If $a_k$ is the interval given by the partition lattice of a set with $k\geq2$ elements, then any
  interval is equivalent to a finite product of such $a_k$ and this product
  expression is unique up to isomorphism of the sets involved.
  
  The reduced incidence coalgebra on the vector space spanned by all equivalence classes (that is,  
  the polynomial ring on the classes $a_k$, $k\geq2$)
  is naturally isomorphic to the \fdb Hopf algebra 
  $\fdbhopf$.
  
  In order to get the `nonreduced' bialgebra $\fdbsymb$, one has to 
  consider a finer equivalence relation: define an 
  interval $[\sigma,\tau]$ to have {\em type} $1^{\lambda_{1}} 2^{\lambda_{2}} \cdots$ if 
  $\lambda_k$ is the number of blocks of $\tau$ that consist of exactly $k$ blocks 
  of $\sigma$,
  and declare two intervals equivalent if they have the 
  same type.  Every interval is isomorphic as a poset to a 
  type-equivalent product of (possibly trivial)
  maximal intervals, yielding a `nonreduced' incidence algebra 
  isomorphic to $\fdbsymb$.
  The 
technicalities involved here
can be avoided by considering
  surjections instead of partitions.
\end{blanko}

\newcommand{\shortsur}{\nolinebreak[4]\!\onto\nolinebreak[4]\!}
\begin{blanko}{\fdb in terms of surjections.}
%
Considering surjections $E\shortsur B$ of nonempty finite sets, 
one can get the bialgebra $\fdbsymb$ directly.
As a vector space it has as basis
the isomorphism classes of surjections. 
The multiplicative structure is
given by disjoint union, and since any surjection is the disjoint union of {\em connected} 
surjections $a_k=(\{1,\dots,k\}\shortsur \{1\})$,
we have $\fdbsymb\cong\C[a_1,a_2,\dots]$.

The comultiplicative structure is given by
$$
\Delta(E\shortsur B) = \sum_{\substack{[E \onto S\onto B]\;\;\in\\[0.6ex]
\pi_0\operatorname{Fact}(E\onto B)}}
\!\!\!\!
(E\shortsur S) \otimes 
(S\shortsur B).
$$
Here the sum is over the components of the {\em factorisation groupoid} 
$\operatorname{Fact}(E\shortsur B)$, which has as objects the factorisations of 
$E\shortsur B$ into two surjections $E\shortsur S\shortsur B$,
and  as morphisms the diagrams:
$$
\xymatrix@R1ex{
&S\ar[dd]^{\simeq}\ar@{->>}[rd]& \\
E\ar@{->>}[ru]\ar@{->>}[rd] && B \,.\\
& S'\ar@{->>}[ru]
}$$
The relation with partitions is clear: 
a surjection $E\shortsur B$  induces a partition of the set $E$, and
a partition of $E$ induces a surjection to the set of parts.  This correspondence provides an equivalence between the groupoid $\surj$ of surjections and that of sets-with-a-partition.


To obtain the \fdb Hopf algebra $\fdbhopf$ 
 we identify surjections with equivalent factorisation groupoids, rather than just isomorphic surjections.
Thus invertible surjections are all equivalent, as they have trivial factorisation groupoids.
This relation is clearly generated by the equation
$(1\shortsur 1)=(\emptyset\shortsur \emptyset)$, that is, $a_1=1$.

The construction of the \fdb bialgebra in terms of the groupoid of
surjections seems to be due to 
Joyal~\cite{JoyalMR633783}.
It is in the spirit of incidence algebras of M\"obius 
categories introduced by Leroux~\cite{Leroux:1975}, and studied recently by
Lawvere and Menni~\cite{LawvereMenniMR2720184}.  However, the category of surjections
is not a M\"obius category, since it contains non-trivial isomorphisms.
In our forthcoming paper~\cite{GKT2} we extend the classical theory of incidence algebras and M\"obius categories by allowing groupoid coefficients in order to cover  the category of
surjections, and also the category of trees in 
Section~\ref{sec:trees} below.
\end{blanko}

\section{The bialgebra of trees, and the 
Main Theorem}
\label{sec:cktrees}

\begin{blanko}{The bialgebra of rooted trees of Connes and 
  Kreimer~\cite{Kreimer:9707029}}, which in essence was studied 
  already by Butcher~\cite{Butcher:1972} in the early 70s,
  is the free 
  algebra $\CK$ on the set of isomorphism classes of
  {\em combinatorial trees} (defined for example as finite connected graphs without
  loops or cycles, and with a designated root vertex). 
  The comultiplication is
  given on generators by
  \begin{eqnarray*}
  \Delta:  \CK & \longrightarrow & \CK \otimes \CK  \\
    T & \longmapsto & \sum_c P_c \otimes S_c ,
  \end{eqnarray*}
  where the sum is over all admissible cuts of $T$; the left-hand factor $P_c$
  is the forest (interpreted as a monomial) found above the cut, and $S_c$ is
  the subtree found below the cut (or the empty forest, in case the cut is below
  the root).  Admissible cut means: either a subtree containing the root, or the empty set.
  $\CK$ is a connected bialgebra: the grading is by the number of nodes, and
  $\CK_0$ is spanned by the unit.  Therefore, by general principles (see for 
  example \cite{Figueroa-GraciaBondia:0408145}), it acquires
  an antipode and becomes a Hopf algebra.
\end{blanko}

\begin{blanko}{Operadic trees.}
  For the present purposes it is crucial to work with {\em operadic trees}
  instead of combinatorial trees; this amounts to allowing loose
  ends (leaves). A formal definition is given in \ref{polytree-def}.  For the
  moment, the following drawings should suffice to exemplify operadic trees ---
  as usual the planar aspect inherent in a drawing should be disregarded:
  \begin{center}\begin{texdraw}
  \linewd 0.5 \footnotesize
  \move (-50 0)
  \bsegment
    \move (0 0) \lvec (0 30)
  \esegment
  
  \move (0 0)
  \bsegment
    \move (0 0) \lvec (0 18) \onedot
  \esegment
  
  \move (50 0)
  \bsegment
    \move (0 0) \lvec (0 36)
    \move (0 18) \onedot
  \esegment
  
  \move (105 0)
  \bsegment
    \move (0 0) \lvec (0 15) \onedot
    \lvec (-5 33) \onedot
    \move (0 15) \lvec (-12 28) \onedot
    \move (0 15) \lvec (4 43)
    \move (0 15) \lvec (12 40)
  \esegment
  
  \move ( 170 0)
  \bsegment
    \move (0 0) \lvec (0 18) \onedot
    \lvec (-6 32) \onedot
    \lvec (-12 57)
    \move (0 18) \lvec (4 40) \onedot
    \lvec (20 50) \onedot
    \lvec (15 65)
    \move (20 50) \lvec (25 65)
    \move (4 40) \lvec (9 54) \onedot
    \move (4 40) \lvec (-4 61)
  \esegment
  \end{texdraw}\end{center}
  Note that certain edges (the {\em leaves}) do not start in a node, and that one
  edge (the obligatory {\em root edge}) does not end in a node.  A node without incoming
  edges is not the same thing as a leaf; it is a nullary operation (i.e.~a
  constant), in the sense of operads.  In operad theory, the nodes represent
  operations, and trees are formal combinations of operations.  The small
  incoming edges drawn at every node serve to keep track of the arities of the
  operations.  Furthermore, for coloured operads, the operations have type
  constraints on their inputs and output, encoded as attributes of the edges.

  The trees appearing in BPHZ renormalisation are naturally
  operadic, as the nodes and edges come equipped with decorations by
  the graphs encoded.  This is briefly  explained in 
  Example~\ref{trees-of-graphs}, following~\cite{Kock:graphs-and-trees}.
\end{blanko}

\begin{blanko}{The bialgebra of operadic trees (cf.~\cite{Kock:1109.5785}).}
  \label{treebialg}
  A {\em cut} of an operadic tree is defined to be a subtree containing the root
  --- note that the arrows in the category of operadic trees
  are arity preserving (\ref{sub}), meaning that if
  a node is in the subtree, then so are all the incident edges of that node.
  
  If $c:S\subset T$ is a subtree containing the root, then each leaf $e$ of
  $S$ determines an ideal subtree of $T$ (\ref{sub}), namely consisting of $e$ (which
  becomes the new root) and all the edges and nodes above it.  This is still
  true when $e$ is a leaf of $T$: in this case, the ideal tree is the
  trivial tree consisting solely of $e$.  Figuratively, this means
  that for operadic trees cuts are not allowed to go above the leaves,
  and that cutting an edge does not remove it, but really cuts it(!).
  Note also that the root
  edge is a subtree;  the ideal tree of the root edge is of course the tree
  itself.  This is the analogue of the cut-below-the-root in the combinatorial
  case.  For a cut $c:S\subset T$, define $P_c$ to be the forest consisting
  of all the ideal trees generated by the leaves of $S$.
  
  Let $\treebialg$ be the free algebra (that is, the polynomial ring) on the set of isomorphism 
  classes of operadic trees, with comultiplication defined on the generators
  by
  \begin{eqnarray*}
  \Delta:  \treebialg & \longrightarrow & \ \ \treebialg \otimes \treebialg  \\
    T & \longmapsto & \sum_{c:S\subset T}\!\! P_c \otimes S.
  \end{eqnarray*}
  As for combinatorial trees, $\treebialg$ becomes a graded bialgebra, but it is
  not connected since $\treebialg_0$ is spanned by all powers of the trivial tree
  $\inlineDotlessTree$.  These are grouplike, so one could obtain a connected bialgebra by
  imposing the equation $\inlineDotlessTree=1$.
\end{blanko}

\begin{blanko}{The Green function.}\label{tree:green}
  In the completion of $\treebialg$ (that is, the power series ring),
  the series
  $$
  G := \sum_{T} \delta_T/\norm{\Aut(T)}
  $$
  is called the {\em Green function}, in analogy with the (combinatorial)
  Green function of Feynman
  graphs.  The sum is over all isomorphism classes of (operadic) trees,
   and there is a formal symbol $\delta_T$ for each isomorphism class of trees.
\end{blanko}

The following \fdb formula for the Green function in the
  bialgebra of (operadic) trees is a special case of our main 
  theorem~(\ref{main-thm-alg}).
\begin{thm}\label{tree:delta}
  
  Write $G= \sum_{n\in\N} 
  g_n$, where $g_n$ is the summand in the Green function corresponding to
  trees with $n$ leaves.  Then
  $$
  \Delta(G)=\sum_{n\in \N} G^n \otimes g_n .
  $$
\end{thm}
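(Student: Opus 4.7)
The natural approach is to derive this from the groupoid-level main theorem, the equivalence \eqref{main-equiv-intro} over $\forest\times\tr$, by taking relative homotopy cardinality on both sides.  Since cardinality is invariant under equivalence of groupoids, it will suffice to identify the two relative cardinalities with $\Delta(G)$ and $\sum_n G^n\otimes g_n$ respectively in the completed bialgebra $\treebialg\hat\otimes\treebialg$.  In the sketch below I therefore assume \eqref{main-equiv-intro} (the content of Section~\ref{sec:fdbequiv}) and the basic properties of relative cardinality (from Section~\ref{Sec:card}), in particular its compatibility with homotopy sums and with products.

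First I would compute the cardinality of the left-hand side.  An object of $\int^{T\in\tr}\cut(T)$ is a pair $(T,c)$ where $c$ is a cut of $T$, and the structure map to $\forest\times\tr$ sends such a pair to the corresponding $(P_c,S_c)$.  Writing the homotopy sum as an ordinary sum weighted by inverse automorphism orders, its relative cardinality evaluates to
$$
\sum_{[T]}\frac{1}{|\Aut(T)|}\sum_{c\,\subset\, T}\delta_{P_c}\otimes\delta_{S_c},
$$
which is exactly $\Delta(G)$ by the definition of the comultiplication in $\treebialg$ (\ref{treebialg}) and of the Green function (\ref{tree:green}).

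For the right-hand side, I would stratify the homotopy sum $\int^{N\in{\fam I}}$ by the size $n$ of the leaf profile $\leaves N$, getting a sum over $n\in\N$ of homotopy sums over profiles $N$ with $|\leaves N|=n$.  The projection to $\tr$ factors through ${}_{\leaves N}\tr$, and collecting the contributions over all $N$ of size $n$ gives precisely $g_n$, the part of the Green function coming from trees with $n$ leaves.  The projection to $\forest$ factors through $\forest_{\leaves N}$, and here one uses that a forest with root profile $\leaves N$ is literally an $N$-indexed family of trees, so multiplicativity of cardinality together with the homotopy sum over $N$ identifies this contribution with $G^n$.  Since over a fixed $N$ the two factors are independent, the relative cardinality of the product is the tensor product, giving $\sum_n G^n\otimes g_n$.

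The only substantive point to check in this reduction is that $\int^{N,\,|N|=n}\forest_{\leaves N}$ genuinely yields $G^n$ and not $G^n/n!$ or some other symmetrisation.  This is precisely the kind of bookkeeping which the introduction stresses is handled automatically by homotopy sums: the groupoid ${\fam I}$ of profiles is set up so that the canonical equivalence between forests with $n$-element root profile and ordered $n$-tuples of trees, combined with multiplicativity of groupoid cardinality over products, produces $G^n$ on the nose.  I expect the main obstacle to be notational rather than conceptual, namely formulating the multiplicativity carefully enough that $G^n$ appears without extra symmetry factors; once this is in place, the theorem falls out of \eqref{main-equiv-intro} by taking cardinality.
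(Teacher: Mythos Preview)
Your proposal is correct and follows essentially the same route as the paper: the algebraic formula is obtained by taking relative cardinality of the groupoid equivalence of Theorem~\ref{main-thm-grpd}, identifying the left-hand side with $\Delta(G)$ and the right-hand side with $\sum_n G^n\otimes g_n$ via Lemma~\ref{famprod} (which gives $\norm{\forest_N}=G^n$ for $\norm N=n$).  On the bookkeeping point you flag: it is $\norm{\forest_N}$ itself that equals $G^n$ on the nose, while the factor $1/n!=1/\norm{\Aut N}$ coming from the homotopy sum over $\fam I$ is absorbed into the other tensor factor, since by definition $g_n=\norm{{}_N\tr}/\norm{\Aut N}$.
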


The more general formula we prove is valid for $P$-trees for any polynomial 
endofunctor $P$.  In addition to the naked trees considered so far,
this covers many  examples  such as
planar trees, binary trees, cyclic trees (Example~\ref{example:cyclic}),
as well as the trees decorated by
connected 1PI graphs of a quantum field theory (Example~\ref{trees-of-graphs}).

  It is essential that we use operadic trees.  There seems to be no reasonable
  Green function for combinatorial trees, since their symmetry factors are not
  related to the combinatorics of grafting.

  



\bigskip

We now first need to review some standard groupoid theory, then introduce
more formally the trees and $P$-trees we treat, before coming to the proofs.

\section{Groupoids}
\label{sec:groupoids}
We recall some standard facts about groupoids, emphasising
the use of the correct {\em homotopy} notions
of the basic constructions such as pullback, fibre, quotient and sum.
Although each of these notions can be traced a long way back (e.g.~\cite{SGA1}, 
\cite{Hakim}, \cite{Brown-Heath-Kamps:Toledo}),
the consistent use of them in applications to combinatorics seems to be new.
It is the systematic use of homotopy sums that makes all the symmetry factors
`disappear'.  

%

\begin{blanko}{Basics.}
  A {\em groupoid} is a category in which every arrow is invertible.  A {\em
morphism} of groupoids is a functor, and we shall also need their natural
isomorphisms.  
%
The set of isomorphisms classes, or components, of a groupoid $X$ is denoted $\pi_0X$.  Many sets arising in combinatorics and physics, such as `the set of all trees', are actually sets of isomorphism classes of  a groupoid.
For each object $x$ the {\em vertex
group}, denoted $\pi_1(x)$ or $\Aut(x)$, consists of all the arrows from
$x$ to itself.
%
The notation $\pi_0$, $\pi_1$ is from topology. 
The homotopy viewpoint of groupoids
is an important aspect, as all the good notions to deal with them are
homotopy notions (e.g.~homotopy pullback, homotopy fibres, homotopy quotients, 
etc.), as we proceed to recall.
\end{blanko}

\begin{blanko}{Equivalence.}
An {\em equivalence} of groupoids is just an equivalence of categories, i.e.~a
  functor admitting a pseudo-inverse.  Pseudo-inverse means that the two
  composites are not necessarily exactly the identity functors, but are only
  required to be isomorphic to the identity functors. 
  A morphism of groupoids is an equivalence if and only if it induces a 
  bijection
  on $\pi_0$, and an isomorphism at the level of $\pi_1$.
%
A groupoid $X$ is called {\em discrete} if it is equivalent to a set
(that is, its vertex groups are all trivial), and
{\em contractible} if it is equivalent to a singleton set.
\end{blanko}



\begin{blanko}{Pullbacks.}  
Recall that the {\em homotopy pullback}~\cite{Brown-Heath-Kamps:Toledo}
or {\em fibre product}
of a diagram of groupoids 
$$
X\xrightarrow {\quad g\quad} S\xleftarrow{\quad f\quad} Y
$$
is the groupoid $X\times_S Y$ whose objects 
are triples
$(x,y,\phi)$
 with $x \in X$, $y\in Y$ and  $\phi:fx\to gy$ an arrow of $S$, and whose
 arrows are pairs $(\alpha,\beta):(x,y,\phi)\to(x',y',\phi')$ consisting
 of
 $\alpha: x \to x'$ an arrow in $X$ and $\beta: y \to y'$ an arrow in $Y$
 such that $g(\beta)\phi=\phi'f(\alpha):fx\to gy'$.

Following~\cite[2.6.2]{Hakim}, one can say that the homotopy pullback and the projections to $X$ and $Y$ can be characterised up to canonical equivalence by a universal 
property: it is the $2$-terminal object in a category of diagrams of the form
$$
\xymatrix{
W \ar[r]\ar[d]\ar@{}[rd]|{\simeq}
&Y\ar[d]^-{g}\\X\ar[r]_-{f}&S
}
$$
where  $2$-terminal means that the comparison
map is not unique but rather that the comparison maps form a contractible groupoid.

\end{blanko}
\begin{blanko}{Fibres.}
The notion of fibre is a special case of pullback, and again we need the 
homotopy version. 
The \emph{homotopy fibre} $E_b$ of a morphism $p:E\to B$ over an object $b$ in $B$ is 
the following homotopy pullback:
$$
\xymatrix{
E_b\drpullback\ar[r]\ar[d]
&E\ar[d]^-{p}\\1\ar[r]_-{\name b}&B.
}
$$
Here ${\name b}:1\to B$ is the inclusion morphism, termed the {\em name} of $b$.

For a morphism $b\to b'$ there is a canonical functor $E_b\to E_{b'}$,
as one sees from the explicit description of fibres as pairs $(e,\phi:pe\cong b)$ and their isomorphisms. Thus we have a strict functor
\begin{equation}\label{fibrefunctor}
F:B\to\Grpd,\qquad F(b)= E_b.
\end{equation}

Henceforth the words pullback and fibre will always mean the homotopy
pullback and homotopy fibre, since these notions are invariant under
equivalence (unlike the strict notions).



%

\end{blanko}


\begin{blanko}{Homotopy quotient.}
%
Whenever a group $G$ acts on a groupoid $X$, 
the {\em homotopy quotient} $X/G$ 
(often denoted $X/\!/G$, 
and known also as orbit groupoid 
\cite[Ch.~11]{Brown:topology-and-groupoids}, 
semi-direct product \cite{Brown-Heath-Kamps:Toledo}, 
\cite[II.5]{Connes:1994}, and
weak quotient \cite{Baez-Dolan:finset-feynman},
\cite{Baez-Hoffnung-Walker:0908.4305}) 
is the groupoid described as follows.
Its objects are those of $X$.  An arrow in $X/G$ from
  $x$ to $y$ is a pair $(g,\phi)$ with $g\in G$ and $\phi: x.g \to y$ an arrow
  in $X$.  Intuitively, $X/G$ is obtained from $X$ by sewing in a path in $X$
  for each object $x$ and each (non-identity) element of the group.  If $X=1$, a
  singleton, then $1/G$ is the groupoid with a single object $1$ and vertex
  group $G$.  This groupoid may also be denoted $BG$, analogous to the classifying space
  in topology.
\end{blanko}

\begin{blanko}{Grothendieck construction and homotopy sum.}\label{hosum}
A family of sets indexed by a set $B$ can be described 
either as a map 
$f:E \to B$ (the members of the family are the fibres $E_b := f^{-1}(b)$) 
or as a map 
$F:B \to \Set$ (the members are then the values $F(b)$).
Similarly, as we proceed to recall, a family of groupoids indexed by a groupoid $B$ can be described in two equivalent ways: 
either as a functor $B \to \Grpd$, or as a map of groupoids $E \to B$. 

Given a functor $F: B \to \Grpd$, the {\em Grothendieck
  construction} (see SGA1~\cite{SGA1}, Exp.VI, \S 8)
  produces a new groupoid $E$ (actually the
  homotopy colimit of $F$) together with a map $E \to B$.  The objects of $E$
  are pairs $(b,x)$ where $b\in B$ and $x\in F(b)$; an arrow from $(b,x)$ to
  $(b',x')$ is a pair $(\sigma,\phi)$ where $\sigma: b \to b'$ is an arrow of
  $B$, and $\phi: (F\sigma)(x) \to x'$ is an arrow of $F(b')$.
  The map $E \to B$ is the projection.
  The groupoid $E$ is called the {\em homotopy sum} of the family $F$,
  and is denoted
  $$
  \grint{b}{B}{F(b)} .
  $$
  

This construction is mutually inverse to the construction in \eqref{fibrefunctor} of the functor $F:B\to\Grpd$ from a map $E\to B$.

  
  
  
\end{blanko}


\begin{prop}\label{prop:groth}
  Given a map of groupoids $f:E \to B$, the total space $E$ is equivalent (over $B$) to the
  homotopy sum of its fibres:
  $$
  E \simeq  \grint{b}{B}{E_b}.
  $$
This can be computed (up to equivalence) as
  $$
  E\simeq \sum_{b\in \pi_0 B} E_b / \Aut(b) .
  $$
\end{prop}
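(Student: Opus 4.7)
The plan is to prove the two assertions in turn: first the global equivalence $E \simeq \int^{b\in B} E_b$, and then the decomposition formula over $\pi_0 B$. Both are standard facts about the Grothendieck construction, so the goal is to give clean arguments that are faithful to the homotopy-invariant setup adopted in the paper, never passing to strict equalities where only isomorphisms are available.

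For the first assertion I would construct a morphism of groupoids
$$
\Phi : \grint{b}{B}{E_b} \longrightarrow E
$$
sending an object $(b, (e, \phi: fe \isopil b))$ of the Grothendieck construction to $e$, and an arrow $(\sigma, \psi)$ to $\psi$. (Here I use the explicit description of the homotopy fibre $E_b$ as pairs $(e, \phi)$ recalled in the Fibres paragraph, and of the Grothendieck construction recalled in \ref{hosum}.) The morphism $\Phi$ lives over $B$ by construction. To show it is an equivalence I would check that it is essentially surjective and fully faithful: for essential surjectivity, any $e \in E$ is the image of $(fe, (e, \id_{fe}))$; for fullness and faithfulness, an arrow $e \to e'$ in $E$ corresponds uniquely (given $\phi, \phi'$) to the pair $(\sigma, \psi)$ where $\sigma = \phi' \circ f(\psi) \circ \phi^{-1}$. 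This is a routine unravelling once the definitions of homotopy fibre and Grothendieck construction are in place.

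For the second assertion I would use the decomposition of $B$ into its connected components, choosing for each $b \in \pi_0 B$ a representative object (still denoted $b$) and an equivalence between its component and $B\Aut(b)$. Homotopy sums are compatible with disjoint unions, and the fibre functor restricted to the component of $b$ corresponds to a single groupoid $E_b$ equipped with an action of $\Aut(b)$ induced by the canonical functors $E_b \to E_{b'}$ from \eqref{fibrefunctor}. Then the Grothendieck construction of the functor $B\Aut(b) \to \Grpd$ with value $E_b$ and this action is, by unwinding the definitions in the Homotopy quotient paragraph, precisely the homotopy quotient $E_b/\Aut(b)$. Summing over components gives $\sum_{b \in \pi_0 B} E_b/\Aut(b)$.

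The main obstacle is not the algebra but the bookkeeping: the fibre functor $F$ of \eqref{fibrefunctor} is only pseudo-functorial up to the coherence inherent in homotopy pullbacks, and the equivalence in part two requires choosing a representative in each component together with an equivalence of groupoids to $B\Aut(b)$. These choices mean the second equivalence is only canonical up to non-canonical equivalence, as already flagged in the statement. Once one is comfortable treating all constructions as invariant under equivalence (as the paper has set up in Section~\ref{sec:groupoids}), the proof reduces to the two unravelings sketched above.
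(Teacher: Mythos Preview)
Your proposal is correct and follows essentially the same approach as the paper's (very terse, two-sentence) proof. The only cosmetic difference is that for the first assertion the paper observes directly that the Grothendieck construction of the fibre functor is literally isomorphic to the explicit homotopy pullback $E\times_B B$ of $f$ along $\id_B$ (so the equivalence with $E$ is automatic), whereas you verify essential surjectivity and full faithfulness of the projection by hand; for the second assertion both arguments apply the first to the composite $E\to B\simeq\sum_{b\in\pi_0 B}1/\Aut(b)$ exactly as you describe.
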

\begin{proof}
This is straightforward: one checks that the explicit construction of the homotopy pullback of $f$ along id$_B$ is actually isomorphic to the Grothendieck construction of the functor \eqref{fibrefunctor}.
Applying the first result to the composite $E\to B\isopil \sum_{b\in\pi_0 B}1/\Aut(b)$
one obtains the second.
\end{proof}

The following can be seen as a Fubini lemma:
\begin{lemma}
  Given morphisms of groupoids $X \stackrel f \to B \stackrel t \to I$, we have
  $$
  \int^{b\in B} X_b  \;\;\simeq \;\;\int^{i\in I} \left( \int^{b\in B_i} X_b  
  \right)  
  $$
  over $I$.
\end{lemma}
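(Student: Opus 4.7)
The plan is to reduce both sides to $X$ itself (viewed as a groupoid over $I$), using Proposition~\ref{prop:groth} twice and the standard pullback-pasting property of homotopy fibres.

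First I would handle the left-hand side directly: Proposition~\ref{prop:groth} applied to the morphism $f:X\to B$ gives an equivalence
$$
X\;\simeq\;\int^{b\in B} X_b
$$
over $B$, and hence also over $I$ via the further morphism $t:B\to I$. So the content of the lemma is really that the right-hand side is also equivalent to $X$ over $I$.

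For the right-hand side, I would apply Proposition~\ref{prop:groth} to the \emph{composite} morphism $tf:X\to I$, obtaining
$$
X\;\simeq\;\int^{i\in I}(tf)^{-1}(i)
$$
over $I$, where $(tf)^{-1}(i)$ denotes the homotopy fibre of $tf$ over $i$. The key geometric observation is the pasting lemma for homotopy pullbacks: in the diagram
$$
\xymatrix{
X\times_B B_i \drpullback \ar[r]\ar[d] & B_i \drpullback \ar[r]\ar[d] & 1 \ar[d]^-{\name i}\\
X \ar[r]_-{f} & B \ar[r]_-{t} & I
}
$$
the outer rectangle is a homotopy pullback whenever both inner squares are, which identifies the homotopy fibre $(tf)^{-1}(i)$ with $X\times_B B_i$. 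Now the projection $X\times_B B_i\to B_i$ is itself a morphism of groupoids whose homotopy fibre over $b\in B_i$ is, again by pullback pasting, equivalent to $X_b$. So a second application of Proposition~\ref{prop:groth} yields
$$
(tf)^{-1}(i)\;\simeq\;X\times_B B_i\;\simeq\;\int^{b\in B_i} X_b
$$
naturally in $i$. Substituting into the previous display gives
$$
X\;\simeq\;\int^{i\in I}\Bigl(\int^{b\in B_i} X_b\Bigr)
$$
over $I$, and combining with the equivalence for the left-hand side completes the proof.

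I do not expect any serious obstacle: the only non-formal ingredient is the pullback-pasting lemma, which is standard for homotopy pullbacks of groupoids and is implicit in the characterisation of pullbacks recalled earlier. The one point requiring a little care is that the equivalences from Proposition~\ref{prop:groth} must be understood as equivalences \emph{over} the base (here $I$), and that the second application, producing a family indexed by $i\in I$, assembles into a genuine functor $I\to\Grpd$ — but this is exactly the Grothendieck construction/fibre correspondence already set up in \eqref{fibrefunctor} and~\ref{hosum}.
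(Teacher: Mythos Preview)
Your argument is correct and is exactly the ``straightforward'' proof the paper alludes to without spelling out: the paper gives no details beyond remarking that the result follows easily, and your two applications of Proposition~\ref{prop:groth} together with pullback pasting are precisely what is needed. Nothing is missing.
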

Again, the proof of the lemma is straightforward, yet it
automatically takes care of a lot of automorphism yoga which
  without the setting of groupoids tends to become messy.
  Already spelling it out in (set) sums and group actions reveals 
  that a lot is going on:
The formula says  
    $$
  \sum_{b\in \pi_0 B} X_b /\Aut(b) \simeq 
   \sum_{i\in \pi_0 I} \left(\sum_{b\in \pi_0 B_i} X_b /\Aut_i(b)\right) / 
   \Aut(i).
  $$
  Note
that $\pi_0 B_i$ denotes the set of connected components of the fibre $B_i$ which is
  typically different from the set of connected components of $B$ that intersect
  the fibre: objects in the fibre might be connected only via arrows in $B$
  that are not in the fibre. Similarly,
 $\Aut_i(b)$ denotes the vertex group of $b$ in the fibre
  $B_i$, not the whole vertex group $\Aut(b)$.

Applying Proposition~\ref{prop:groth} twice we get the following easy double-counting lemma.  It
can be seen as the groupoid analogue of double counting in a bipartite
graph, held by Aigner~\cite{Aigner} as one of the most important principles in
enumerative combinatorics.

  \begin{lemma}\label{doublecounting}
Let $A,B,U$ be groupoids, together with morphisms
$$\xymatrix{B&\ar[l]U\ar[r]&A}$$
and write ${}U_S,\: _TU\subseteq U$ for the (homotopy) fibres over $S\in A$ and $T\in
B$ respectively.  Then there are equivalences of groupoids
\begin{align*}
  \grint{T}{B}{{}_TU}
&\;\;\simeq \;\;U\;\;\simeq\;\;
\grint{S}{A}{U_S}
.
\end{align*}
\end{lemma}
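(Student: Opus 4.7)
The lemma is essentially a direct application of Proposition~\ref{prop:groth}, applied once to each of the two structural maps out of $U$. The paper itself signals this by saying ``Applying Proposition~\ref{prop:groth} twice we get the following easy double-counting lemma.''

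My plan is as follows. First, consider the morphism $U \to B$. By Proposition~\ref{prop:groth}, the total groupoid $U$ is equivalent, over $B$, to the homotopy sum of its homotopy fibres; since the fibre of $U \to B$ over an object $T \in B$ is by definition ${}_TU$, this gives the equivalence
$$
U \;\;\simeq\;\; \grint{T}{B}{{}_TU}
$$
over $B$. Second, apply Proposition~\ref{prop:groth} to the other morphism $U \to A$; its homotopy fibre over $S \in A$ is $U_S$, yielding
$$
U \;\;\simeq\;\; \grint{S}{A}{U_S}
$$
over $A$. Concatenating the two equivalences produces the stated chain.

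There is really no obstacle here, since the work has already been done in Proposition~\ref{prop:groth}. The only point worth being slightly careful about is the indexing of the homotopy sums by the honest (homotopy) groupoids $A$ and $B$, rather than by their sets of components $\pi_0 A$ and $\pi_0 B$: using the unreduced form $\grint{b}{B}{E_b}$ (i.e.~the Grothendieck construction itself) is precisely what makes the identification with $U$ canonical and avoids any manipulation of automorphism groups. The second form $\sum_{b\in\pi_0 B} E_b/\Aut(b)$ from Proposition~\ref{prop:groth} could equally be invoked to give a more concrete, but non-canonical, description of either side. In short, the proof amounts to quoting Proposition~\ref{prop:groth} twice, once for each leg of the span $B \leftarrow U \to A$.
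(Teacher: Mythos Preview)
Your proof is correct and matches the paper's own argument exactly: the paper does not even give a separate proof environment for this lemma, but simply says ``Applying Proposition~\ref{prop:groth} twice we get the following easy double-counting lemma,'' which is precisely what you do---once for each leg of the span $B \leftarrow U \to A$. Your additional remark about using the Grothendieck-construction form rather than the $\pi_0$-indexed form is a helpful clarification, but the substance is identical.
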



\begin{blanko}{Slices.}\label{subsect-slices}\label{adj}
  We shall need homotopy slices, sometimes called weak slices. 
 The {\em slice category} $\Grpd_{/I}$ 
has as objects the morphisms $X \to I$, and arrows
are triangles with a $2$-cell, that is, a natural transformation:
\begin{equation}\label{triangledigram}
\vcenter{\xymatrix@R-3.5ex{
X \ar[rdd] \ar[rr]    && X' \ar[ldd]   \\ &\Rightarrow\\
&     I   .
}}
\end{equation}
Arrows are composed by pasting such triangles.




Taking homotopy pullback along a morphism of groupoids  $f:B'\to B$  defines a functor
$$
f\upperstar
:\Grpd_{/B}\to \Grpd_{/B'} .
$$
This has a homotopy left adjoint, defined by composition with $f$,
$$f\lowershriek  :\Grpd_{/B'}\to \Grpd_{/B}$$ 
and a homotopy right adjoint 
$$f\lowerstar :\Grpd_{/B'}\to \Grpd_{/B},$$ 
in the sense that there are
natural equivalences of mapping groupoids
\begin{align}
\Grpd_{/B}(f\lowershriek E',E)&\simeq
\Grpd_{/B'}(E',f\upperstar E),\label{slice-adj}
\\
\Grpd_{/B'}(f\upperstar E,E')&\simeq
\Grpd_{/B}(E,f\lowerstar E') .
\end{align}
\end{blanko}

\begin{blanko}{$I$-coloured finite 
  sets, or families of objects in $I$.}\label{fam}
 Let  $\Bij$ denote the groupoid of finite sets and bijections. Since a set may be regarded as discrete groupoid, we can consider $\Bij$ as a groupoid-enriched subcategory of $\Grpd$.
For a groupoid $I$, the groupoid of $I$-{\em coloured sets} is the slice category  $$
  \fam I := \Bij_{/I} .
  $$
%
%
%
Hence an $I$-coloured set is a
  groupoid morphism $X \to I$, where $X$ is a finite set, and isomorphisms between them are are triangles with a $2$-cell as in \eqref{triangledigram}.
If $I=1$ is the one-point trivial groupoid, we recover the groupoid of (one-coloured) sets and bijections,
  $\fam 1 \simeq \Bij$.

  The groupoid $\fam I$ can be considered also as the groupoid of families of
  objects in $I$.  In this case, the finite set $X$ plays a secondary role, it
  is merely an indexing set for the family.  We use this viewpoint for example
  when we say that a forest is a family of trees.  Formally, if $\tr$ is the 
  groupoid of trees (cf.~below), then the groupoid of forests is
  $$
  \forest = \fam \tr .
  $$
  
  
  It should be mentioned, although we will not need this fact, that $\fam I$ is 
  the free symmetric monoidal category on $I$.
 \end{blanko}


\section{Trees and forests}
\label{sec:trees}

\begin{blanko}{Polynomial functors.}
  The theory of polynomial functors (for which we refer to 
  \cite{Gambino-Kock:0906.4931}) is very useful to encode combinatorial 
  structures, types and operations, and covers notions such as species and operads.
Any diagram of groupoids 
 $$
 I \stackrel s \lTo E \stackrel p \rTo B \stackrel t \rTo I
 $$
 defines a {\em polynomial endofunctor} as the composite  (see \ref{adj})
 $$
 \Grpd_{/I} \xrightarrow {s^* } \Grpd_{/E} \xrightarrow {p_* }
 \Grpd_{/B} \xrightarrow{t\lowershriek}  \Grpd_{/I} .
 $$
The intuition is that $B$ is a collection of typed operations. The arity of an operation $b$ is given by the size of the fibre $E_b$, the input types are the $s(e)$ for $e\in E_b$, and the output type is $t(b)$.
  
We shall see examples of polynomial functors
  in Section~\ref{sec:ex}.

\end{blanko}

\begin{blanko}{Trees.}\label{polytree-def}
  It was observed in \cite{Kock:0807} that operadic trees can be
  conveniently encoded by diagrams of the same shape as polynomial functors. 
  By definition, a
  {\em tree} is a diagram of finite sets
\begin{equation}\label{tree}
\xymatrix{
    A & \ar[l]_s  M  \ar[r]^p & N  \ar[r]^t & A
}
\end{equation}
satisfying the following three conditions:
  
  (1) $t$ is injective
  
  (2) $s$ is injective with singleton complement (called the {\em 
  root} and denoted $1$).
  
  \noindent With $A=1+M$, 
  define the walk-to-the-root function
  $\sigma: A \to A$ by $1\mapsto 1$ and $e\mapsto t(p(e))$ for
  $e\in M$. 
  
  (3)  $\forall x\in A : \exists k\in \N : \sigma^{k}(x)=1$.
  
  The elements of $A$ are called {\em edges}.  The elements of $N$
  are called {\em nodes}.  For $b\in N$, the edge $t(b)$ is called
  the {\em output edge} of the node.  That $t$ is injective is just to
  say that each edge is the output edge of at most one node.  For
  $b\in N$, the elements of the fibre $M_b:= p^{-1}(b)$ are
  called {\em input edges} of $b$.  Hence the whole set
  $M=\sum_{b\in N} M_b$ can be thought of as the set of
  nodes-with-a-marked-input-edge, i.e.~pairs $(b,e)$ where $b$ is a
  node and $e$ is an input edge of $b$.  The map $s$ returns the
  marked edge.  Condition (2) says that every edge is the input edge
  of a unique node, except the root edge.
  Condition (3) says that if you walk towards the root, in a finite 
  number of steps you arrive there.
  The edges not in the image of $t$ are called {\em leaves}.
  The tree
  $
  1 \leftarrow 0 \to 0 \to 1
  $
  is the {\em trivial tree} \inlineDotlessTree.
\end{blanko}

\begin{blanko}{Morphisms of trees (cf.~\cite{Kock:0807}).}\label{sub}
  A {\em tree embedding} is by definition a diagram
  \begin{equation}
    \label{equ:cartmorphism}
\vcenter{\xymatrix{
  A' \ar[d]_\alpha& \ar[l]  M'\ar[d] \drpullback \ar[r] & N'\ar[d] \ar[r] & A'\ar[d]^\alpha  \\
  A  &\ar[l] M\ar[r] & N \ar[r]  &A ,
  }}
  \end{equation}
  where the rows are trees.
%
(It follows from the tree axioms that the components are injective.)
 The fact that the middle square is cartesian means that there is
  specified, for each node $b$ of the first tree, a bijection between
  the incoming edges of $b$ and the incoming edges of the image of $b$.
  In other words, a tree embedding is arity preserving.

  A tree embedding is {\em root-preserving} when it sends the root to the root.
  In formal terms, these are diagrams \eqref{equ:cartmorphism}
  such that also the left-hand square is cartesian.
  
  An {\em ideal embedding} (or an {\em ideal subtree}) is a subtree $S$
  in which for every edge $e$, all the edges and nodes above $e$ are also in $S$.
  There is one ideal subtree
  generated by each edge in the tree.  The ideal embeddings are characterised
  as having also the right-hand square of \eqref{equ:cartmorphism} cartesian.
  
  Ideal embeddings and root-preserving embeddings admit pushouts along each 
  other in the category $\TEmb$ 
of
  trees and tree embeddings 
\cite{Kock:0807}.
  The most interesting case is pushout over a 
  trivial tree: this is then the root of one tree and a leaf of another tree,
  and the pushout is the grafting onto that leaf.
\end{blanko}

\begin{blanko}{Decorated trees: $P$-trees.}\label{Ptree}
  An efficient way of encoding and manipulating decorations of trees
  is in terms of polynomial functors \cite{Kock:0807} (see also 
  \cite{Kock:1109.5785,Kock:MFPS28,Kock:graphs-and-trees,Kock-Joyal-Batanin-Mascari:0706}).
Given a polynomial
  endofunctor $P$ represented by a diagram
  $
  I \stackrel s \lTo E \stackrel p \rTo B \stackrel t \rTo I ,
  $
a {\em $P$-tree} is a diagram
$$    \xymatrix{
  A \ar[d]_\alpha& \ar[l]  M\ar[d] \drpullback \ar[r] & N\ar[d] \ar[r] 
& A\ar[d]^\alpha \\
  I  &\ar[l] E\ar[r] & B \ar[r]  &I ,
  }$$
  where the top row is a tree.  
  The squares
  are commutative up to isomorphism, and it is important that the isos 
  be specified as
  part of the structure.
  Unfolding the definition, we see that a $P$-tree is a
  tree whose edges are decorated in $I$, whose nodes are decorated
  in $B$, and with the additional structure of a bijection for each
  node $n \in N$ (with decoration $b \in B$) between the set of
  input edges of $n$ and the fibre $E_b$, subject to the
  compatibility condition that such an edge $e\in E_b$ has
  decoration $s(e)$, and the output edge of $n$ has decoration isomorphic
  to $t(b)$.
  
  Standard examples of $P$-trees are given in Section~\ref{sec:ex},
  where we also consider groupoid-polynomial 
  decorated trees arising  naturally in quantum field theory, where in
  order to account for symmetries it is crucial that the representing
  diagram $I \leftarrow E \to B \to I$ be of groupoids, not just sets.

  The category of $P$-trees is the slice category $\TEmb_{/P}$.
  The notions of root-preserving and ideal embeddings work the same in this
  category as in $\TEmb$, and again these two classes of maps allow pushouts
  along each other.
\end{blanko}


\begin{blanko}{Forests.}\label{forests}
  A forest can be defined as a family of trees, or equivalently as a finite sum
  of trees in the category of polynomial endofunctors.  It is convenient to
  have also an elementary definition, similar
  to that of trees.

  By definition, a
  {\em (finite rooted) forest} is a diagram of finite sets
$$\xymatrix{
    A & \ar[l]_s  M  \ar[r]^p & N \ar[r]^t & A
}$$
satisfying the following three conditions:
  
  (1) $t$ is injective
  
  (2) $s$ is injective; denote its complement $R$ (the set of roots).
  
  \noindent With $A=R+M$, 
  define the walk-to-the-roots function
  $\sigma: A \to A$ by being the identity on $R$, and $e\mapsto t(p(e))$ for
  $e\in M$. 
  
  (3)  $\forall x\in A : \exists k\in \N : \sigma^{k}(x)\in R$.
  
  The interpretations of these axioms are similar to those following the 
  definition of tree.
  
  A {\em forest embedding} is by definition a diagram like 
  \eqref{equ:cartmorphism}, required now separately to be injective (whereas for
  trees this condition is automatic, for forests absence of the condition gives 
  only etale maps). 
  
  A forest embedding is called a {\em root-preserving embedding} if it induces
  a bijection between the sets of roots.  This is equivalent to being a sum of
  tree embeddings.  By {\em ideal embedding} we understand an embedding
  such that the right-hand square of \eqref{equ:cartmorphism} is cartesian.
  This means that each edge and node above the subforest is also
  contained in the subforest. 
  The most important example will be this: for a 
  given tree $S$, the set of its leaves forms a forest, and the inclusion of this
  forest into $S$ is an ideal embedding.
  
  Just as for trees, root-preserving embeddings and ideal embeddings allow
  pushouts along each other (in the category of forests and forest embeddings).
  The important case is grafting a forest onto the leaves of a tree.
\end{blanko}

\begin{blanko}{$P$-forests.}
    The definition of $P$-forest is analogous to the definition of 
    $P$-tree, and again the category of $P$-forest embeddings can be 
    characterised as the finite-sum completion of $\TEmb_{/P}$ inside the slice 
    category $\kat{Poly}_{/P}$.
\end{blanko}


\section{\fdb equivalence in the groupoid of trees}
\label{sec:fdbequiv}

We fix a polynomial endofunctor $P$ given by
$$
I \leftarrow E \to B \to I  .
$$
Throughout this section 
the word `tree' will mean $P$-tree, and `forest' will mean $P$-forest.
We denote the groupoids of $P$-trees and $P$-forests by $\tr$ and $\forest$ respectively.

In this section we  prove our main theorem, 
the equivalence of groupoids over $\forest\times\tr$
$$
\grint{T}{\tr}{\cut(T)} \simeq \grint{\leaves{N}}{\fam I}{
  \forest_{N}
\times {}_{\leaves{N}}\tr} .
$$
In Section~\ref{fdbalg} 
we will
obtain the \fdb formula 
for the Green function in the bialgebra of trees
by taking relative cardinality of both sides.


\begin{blanko}{Leaves and roots.}
  To any tree or to any forest we can associate its set of leaves.  These are 
  naturally  $I$-coloured sets, and we have  groupoid morphisms  
$
L\colon \tr \to \fam I$ and $
L\colon \forest \to \fam I
$,
called the {\em leaf maps}. Similarly, taking the root of a tree, and the set of roots of a forest, we have groupoid morphisms
$R\colon\tr \to I$ and 
$R\colon\forest\to \fam I$,
called the {\em root maps}.
We use two-sided subscript notation to indicate the fibres of these maps,
\begin{equation}\label{span}
\vcenter{\xymatrix{&\ar[ld]_L\forest\ar[rd]^R&&\ar[ld]_L\tr\ar[rd]^R\\
{\fam I}&&{\fam I}&&I}}
\end{equation}
Hence, we denote by $\tr_k$ the groupoid of trees with root colour $k\in I$
(or more precisely: with root colour isomorphic to $k$, and with a specified 
iso)
and by $\forest_N$ the groupoid of forests whose set of roots is $N\in {\fam 
I}$ (again, up to a specified iso). 
Similarly, for the fibres of $L$, we write ${}_{\leaves{N}}\forest$ and 
${}_{\leaves{N}}\tr$ 
for the groupoids of forests and  
trees with leaf profile $\leaves{N}$.
These are the groupoids
of $P$-forests or $P$-trees with specified $I$-bijections between their leaves and 
$\leaves{N}$.

\end{blanko}

The groupoid of forests with a given {\em root profile} has the following characterisation:
\begin{lemma}\label{Fn=T^n}
    $$
    \forest_N \simeq \Grpd_{/I}(\leaves{N},\tr) .
    $$
\end{lemma}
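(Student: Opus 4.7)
The plan is to unfold both sides using the identification $\forest = \fam \tr = \Bij_{/\tr}$ from \ref{fam}. Under this identification, the root map $R\colon \forest \to \fam I$ becomes nothing other than post-composition with the tree-root map $R\colon \tr \to I$, i.e.~the image of $R$ under the slicing $\Bij_{/-}$.

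First, I would unpack the homotopy fibre explicitly. Writing $\leaves{N}$ as a map $X \to I$ of finite sets, an object of $\forest_N$ is a forest $F\colon X' \to \tr$ together with an isomorphism in $\fam I$ between $R \circ F$ and $\leaves{N}$. Spelled out, such an isomorphism consists of a bijection $\beta\colon X' \to X$ and a $2$-iso $\alpha\colon \leaves{N} \circ \beta \Rightarrow R \circ F$ in $\Grpd$. Using $\beta$ to identify $X'$ with $X$ (that is, replacing $F$ by $F \circ \beta^{-1}$), we may assume $\beta = \Id_X$, so the data reduces to a functor $F\colon \leaves{N} \to \tr$ together with a $2$-iso $\alpha\colon \leaves{N} \Rightarrow R \circ F$ over $I$.

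Second, I would observe that this is exactly the data of an object of the mapping groupoid $\Grpd_{/I}(\leaves{N}, \tr)$ described in \ref{subsect-slices}, where $\leaves{N}$ is viewed as an object of $\Grpd_{/I}$ via its $I$-coloring $X \to I$, and $\tr$ via the root map $R\colon \tr \to I$. The correspondence on morphisms is similarly immediate: an iso of forests over $\leaves{N}$ is precisely a $2$-cell between the two triangles over $I$, i.e.~a morphism in $\Grpd_{/I}(\leaves{N}, \tr)$. A routine check shows the two constructions are mutually quasi-inverse.

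The main obstacle is purely bookkeeping: keeping track of the $2$-cells and justifying the transport step that absorbs $\beta$ into $F$. Since $\leaves{N}$ is a discrete groupoid (a genuine finite set, equipped only with its $I$-coloring), no higher coherence issues arise, and the argument is entirely formal once the definitions are unfolded. In fact, viewing the slice construction $\Bij_{/-}$ as a functor between $2$-categories, the lemma is a special case of the general fact that the fibre of $\fam f\colon \Bij_{/A} \to \Bij_{/B}$ over a finite $B$-set $N$ is the mapping groupoid $\Grpd_{/B}(N, A)$.
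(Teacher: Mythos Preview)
Your proof is correct and proceeds by direct unfolding of the definitions, which is a more elementary route than the paper's. The paper instead works abstractly: it first uses the adjunction $\name{N}_! \dashv \name{N}^*$ to rewrite the fibre as $\Grpd_{/\fam I}(\name{N}, \fam R)$, and then deduces the equivalence $\Grpd_{/\fam I}(\name{N}, \fam R) \simeq \Grpd_{/I}(N, R)$ from a pullback of slice fibre sequences. Your approach has the virtue of being explicit and self-contained, making visible exactly where the bijection $\beta$ gets absorbed; the paper's approach has the virtue of isolating the general categorical fact (passage from $\fam I$-slices to $I$-slices via a pullback of fibre sequences) that could be reused elsewhere. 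Both rest on the same key observation, which you state at the outset: the forest root map is $\fam R$, the family functor applied to the tree root map.
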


\begin{dem}
The forest root map $\forest\to\fam I$ is the family functor applied to 
  the tree root map, that is, $\fam R : \fam \tr \to \fam I$.
  Hence we can write, by adjunction: 
$$
\forest_N \simeq \name{N}\upperstar \fam R \simeq \Grpd( 1, \name{N}\upperstar \fam R)
\simeq \Grpd_{/\fam{I}} ( \name{N}, \fam R).
$$
It remains to establish the equivalence
$$
\Grpd_{/\fam{I}} ( \name{N}, \fam R)
\simeq \Grpd_{/I} (N, R) .
$$
%
%
%
Consider the commutative diagram
$$\xymatrix{
\Grpd_{/I} (N, R) \ar[r]\ar[d] & \Grpd_{/\fam I}(\name{N},\fam R) \ar[d] \\
\Grpd (X, \tr) \drpullback\ar[r]\ar[d] & \Grpd(1,\fam \tr) \ar[d] \\
\Grpd(X, I) \ar[r] & \Grpd(1,\fam I)
}
$$ in which the vertical maps form the standard slice fibre sequences;
the bottom vertical maps are postcomposition with $R$ and $\fam R$, 
respectively.
Each of the horizontal maps sends a family to its name.
Since the bottom square is a pullback, we conclude that the top map
is an equivalence.
\end{dem}


\begin{blanko}{The groupoid of trees with a cut.}
  In~\ref{treebialg} we already defined a cut in a tree $T$ 
  to be a subtree $S$ containing the root.
  For varying $T$, these form a groupoid  $\treescuts$:
  its objects are the root
preserving inclusions $c:S\rightarrowtail T$, 
and its arrows are the
isomorphisms of such inclusions, i.e.~commutative diagrams

\begin{equation}\label{treescutsiso}
\vcenter{\xymatrix{T\ar[r]^-\tau_-\cong&T'\\S\ar@{ >->}[u]^c
\ar[r]^-\sigma_-\cong&S'\ar@{ >->}[u]_{c'}.}}
\end{equation}
This groupoid comes equipped with canonical morphisms $m,r:\treescuts\to\tr$ 
and $w:\treescuts\to\forest$:
when applied to a cut $c:S\rightarrowtail T$, the map
$m$ returns the total tree $T$, the map $r$
returns the subtree (i.e.~the tree $S_c$ found below the cut), and the map $w$ returns the 
forest $P_c$ consisting of the ideal trees in
$T$ generated by the leaves of $S$. These maps and the morphisms $L,R$ in \eqref{span} above form a commutative diagram
\begin{equation}\label{treecutmaps}
\vcenter{\xymatrix{
%
%
%
\treescuts\ar@{-->}[dr]|{\vphantom{|}m}
\ar[rr]^r\ar[dd]_w&&\tr\ar[dd]_(.35){^L}\ar@{-->}[dr]^{^R}
\\
& \tr \ar@{-->}'[r]_{^R}[rr]\ar@{-->}'[d]^{^L}[dd]&& I
\\
 \forest \ar[rr]^(.35){_R} \ar@{-->}[dr]_{^L}
   & & {\fam I}
\\  &{\fam I}
}}
\end{equation}
 We denote by ${}_T\treescuts$, $\treescuts_S$ and $\treescuts_N$ the fibres of the functors $m$, $r$ and $L\circ r$.

 For a fixed tree $T$, the arrows of the groupoid ${}_T\treescuts$ are
$$
\vcenter{\xymatrix{T\ar[r]^-=&T\\R\ar@{ >->}[u]\ar[r]_-\cong&R'\ar@{ >->}[u]}}
$$
and since the vertical maps are monomorphisms,  we see that 
this groupoid has no nontrivial
automorphisms, and hence is equivalent to a discrete groupoid which we denote 
by $\cut(T)$; we refer to its objects as the cuts of $T$.
Thus
$
{}_T\treescuts\;\simeq\; \pi_0 ({}_T\treescuts)\;=\;\cut(T) 
$
and the double-counting lemma \ref{doublecounting} implies the following.
\end{blanko}

\begin{lemma}\label{tree:doublecounting}
We have equivalences of groupoids
\begin{align*}
  \grint{T}{\tr}{\cut(T)} \ \simeq \
  \grint{T}{\tr}{{}_T\treescuts}
&\;\simeq \;\treescuts
\;\simeq\;
\grint{S}{\tr}{\treescuts_S}
\;\simeq\;
\grint{N}{{\fam I}}{\treescuts_N}
\end{align*}
\end{lemma}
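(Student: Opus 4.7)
The plan is to derive all four equivalences directly from the double-counting Lemma~\ref{doublecounting} and Proposition~\ref{prop:groth}, combined with the observation (made in the excerpt just before the lemma) that ${}_T\treescuts$ is equivalent to the discrete groupoid $\cut(T)$.

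First, I would apply Lemma~\ref{doublecounting} to the span $\tr \xleftarrow{m} \treescuts \xrightarrow{r} \tr$ read off from diagram~\eqref{treecutmaps}: by definition the homotopy fibres of $m$ and $r$ over a tree are ${}_T\treescuts$ and $\treescuts_S$ respectively, so the lemma immediately delivers the central chain
$$
\grint{T}{\tr}{{}_T\treescuts} \;\simeq\; \treescuts \;\simeq\; \grint{S}{\tr}{\treescuts_S}.
$$

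For the leftmost equivalence, I would invoke the identification ${}_T\treescuts \simeq \cut(T)$ already established: the injectivity of the inclusion $c:S\rightarrowtail T$ forces any automorphism as in \eqref{treescutsiso} with $\tau = \mathrm{id}_T$ to have $\sigma = \mathrm{id}_S$, so ${}_T\treescuts$ carries only identity automorphisms and is equivalent to its set of components $\cut(T)$. Since homotopy sums respect pointwise equivalences of their summand families, this lets us replace ${}_T\treescuts$ by $\cut(T)$ inside the integral over $T$, yielding $\grint{T}{\tr}{\cut(T)} \simeq \grint{T}{\tr}{{}_T\treescuts}$.

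For the rightmost equivalence, I would apply Proposition~\ref{prop:groth} to the composite functor $L \circ r: \treescuts \to \fam I$: its homotopy fibre over $N \in \fam I$ is by definition $\treescuts_N$, so the proposition yields $\treescuts \simeq \grint{N}{\fam I}{\treescuts_N}$, which via the previously established equivalence $\treescuts \simeq \grint{S}{\tr}{\treescuts_S}$ closes the chain. There is no real obstacle; the entire proof is a matter of correctly identifying the homotopy fibres of the canonical maps emanating from $\treescuts$ in~\eqref{treecutmaps} and then invoking the general double-counting principle in each case.
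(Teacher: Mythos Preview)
Your proposal is correct and matches the paper's approach exactly: the paper simply asserts that the discreteness of ${}_T\treescuts$ together with the double-counting Lemma~\ref{doublecounting} yields the result, and you have spelled out precisely how. Your use of Proposition~\ref{prop:groth} for the rightmost equivalence is fine, since Lemma~\ref{doublecounting} is itself just two applications of that proposition.
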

\medskip


The following Main Lemma states that the solid
square face of \eqref{treecutmaps} is a
(homotopy) pullback square and enables us to identify the fibres $\treescuts_S$
and $\treescuts_N$.

\begin{lemma}\label{tree:key}
The canonical morphism to the product
$$
(w,r)\colon\treescuts\longrightarrow  \forest\times\tr
$$
that sends  $c:S\rightarrowtail T$
to $(P_c,S_c)$, induces an equivalence 
$$
\treescuts\simeq  \forest\times_{\fam I}\tr.
$$
\end{lemma}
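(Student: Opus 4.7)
The plan is to show that $(w,r)$ factors through the fibre product and induces an equivalence by exhibiting a pseudo-inverse built from grafting.

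First I would verify the factorisation through the fibre product. Given a cut $c:S\rightarrowtail T$, the forest $P_c$ is by definition the family of ideal subtrees of $T$ generated by the leaves of $S$, indexed by those leaves. There is therefore a tautological $I$-coloured bijection between the leaves of $S_c=S$ and the roots of $P_c$, showing that the pair $(P_c,S_c)$ comes with a canonical isomorphism $L(S_c)\isopil R(P_c)$ in $\fam I$. This gives the lift $\treescuts\to \forest\times_{\fam I}\tr$.

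Next I would construct a pseudo-inverse by grafting. Given an object of $\forest\times_{\fam I}\tr$, that is, a triple $(P,S,\phi)$ with $\phi\colon R(P)\isopil L(S)$ in $\fam I$, the inclusion of leaves $L(S)\rightarrowtail S$ is an ideal embedding (of forests), while the inclusion of roots $R(P)\rightarrowtail P$ is a root-preserving embedding; using $\phi$ to identify these trivial forests, we form their pushout in the category of $P$-forest embeddings. By \ref{sub} and \ref{forests} this pushout exists and is precisely the grafting of $P$ onto the leaves of $S$. Since $S$ has a unique root, the result is again a tree $T$, and the canonical map $S\rightarrowtail T$ is a root-preserving embedding, hence a cut with $S_c=S$ and $P_c=P$.

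Finally I would check that these constructions are mutually inverse up to canonical isomorphism. One direction is just the pushout universal property: reconstructing $T$ from $(P_c,S_c)$ by grafting returns the original tree up to canonical iso, because $T$ itself is the pushout of $P_c\leftarrow \leaves(S)\rightarrow S$. The other direction is immediate: cutting the grafted tree $T$ at the image of $S$ returns $(P,S,\phi)$ on the nose. On morphisms, an isomorphism of cuts \eqref{treescutsiso} restricts on the leaves of $S$ and $S'$ to a compatible iso of $P_c$ with $P_{c'}$, and conversely any compatible pair of isos of $(P,S)$ and $(P',S')$ induces a unique iso of the grafted trees by the pushout property. Essential surjectivity and full faithfulness of $(w,r)$ over $\forest\times_{\fam I}\tr$ both follow.

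The only real content is the grafting construction (the pushout of an ideal embedding along a root-preserving one), which is already available from \ref{sub} and \ref{forests}; everything else is formal manipulation of the universal property. The mildly delicate point to keep in mind is coherence of the isomorphism $\phi$ with the $P$-decorations, but this is automatic because $\phi$ is a morphism in $\fam I$ and the grafting pushout is taken in the slice $\TEmb_{/P}$.
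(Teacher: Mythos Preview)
Your proposal is correct and follows essentially the same approach as the paper: both construct the pseudo-inverse by grafting, realised as the pushout of the ideal embedding $L(S)\rightarrowtail S$ against the root-preserving embedding $L(S)\rightarrowtail P$ induced by the comparison iso, and both recover the forward map by pruning into ideal subtrees indexed by the leaves of $S$. Your account is slightly more explicit about the factorisation through the fibre product and the use of the pushout universal property to verify the two composites, whereas the paper simply describes the two functors and leaves the verification implicit; the underlying argument is the same.
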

\begin{proof}
Starting with an
object $(P,\,S,\,L(S)\stackrel\lambda\cong R(P))$ of the pullback, we construct a tree with a cut by {\em grafting}.
The isomorphism $\lambda$ may be regarded as a root-preserving embedding of forests
$$LS\rightarrowtail P=\sum_{\ell\in LS}T_{\lambda(\ell)},$$
and we construct the pushout in the category of forests of this map and the 
ideal subforest embedding $LS \to S$,
$$\xymatrix{
\sum T_{\lambda(\ell)}\ar[r]&T\dlpullback\\LS\ar@{ >->}[u]\ar[r]&S\ar@{ >->}[u]
}$$
to obtain a root-preserving embedding $S\rightarrowtail T$  in the sense of \ref{forests}.
Note that since the forest $S$ is a tree, $T$ is again a tree.
This assignment is functorial:
an isomorphism $(\rho,\sigma)$ from $(P,S,\lambda)$ to $(P',S',\lambda)$ induces an isomorphism of pushouts $\tau:T\cong T'$ extending $\sigma$ as in \eqref{treescutsiso}.

In the reverse direction, we {\em prune} a root-preserving inclusion $S\rightarrowtail
T$ to obtain $(\sum T_\ell,S,\Id)$ where $T_\ell$ is the ideal subtree of $T$
generated by the image of the leaf edge $\ell$ in $T$.  An isomorphism of root-preserving inclusions \eqref{treescutsiso} is sent to $(\tau,\sigma)$ where
$\tau_\ell:T_\ell\to T_{\tau\ell}'$ is the restriction of $\tau$ to the ideal
subtree $T_\ell$.
\end{proof}

$$\xygraph{!{<0mm,0mm>;<3.9mm,0cm>:<0mm,6mm>::}
!~R{[]*-<2.5pt,2.5pt>[blue]{\bullet}} !~L{[]*-<.5pt,.5pt>{\phantom\bullet}} !~C{[]} 
!~N{[]*-<2.5pt,2.5pt>{\bullet}} 
!{(-0.5,-0.8)} *{c:{
S}\rightarrowtail T}
!{(12,-0.8)} *{{
S_c,}}
!{(20,-0.8)} *{
\qquad\qquad P_c=\sum T_{\rho_i},
\qquad \ell_i\stackrel\lambda\longleftrightarrow\rho_i.
}
!{(4,3.5)}  !L="A"
!{(9,3.5)}  !L="B"
!{(4,0)}  !L="D"
!{(9,0)}  !L="C"
!{(0,0)}  !L="x"
!{(0,1)}  !R="c"
!{(0,2)}  !R="e"  !{(1,2)}  !R="f"  !{(2,2)}  !R="g"
!{(-3,2)} !N="h"  !{(-1,2)} !N="i"  !{(1,3)}  !L="j"  !{(2,3)}  !N="k"  !{(3,3)} !N="l"
!{(-4,3)} !L="m"  !{(-3,3)} !L="n"  !{(-2,3)} !L="o"  !{(-1,3)} !L="p"  !{(0,3)} !N="q"   !{(2,4)} !L="r"
!{(-1.5,1.5)} !C="ch" !{(-0.5,1.5)} !C="ci" !{(1,2.5)} !C="fj" !{(2,2.5)} !C="gk" !{(2.5,2.5)} !C="gl"
!{(12,0)} !L="xx"
!{(12,1)} !R="cc"
!{(12,2)} !R="ee"  !{(13,2)} !R="ff"  !{(14,2)}  !R="gg"
!{(9,2)}  !L="hh"  !{(11,2)} !L="ii"  !{(13,3)}  !L="jj"  !{(14,3)} !L="kk" !{(15,3)} !L="ll"
!{(17,0)} !L="td1" !{(19,0)} !L="td2"
!{(21,0)} !L="tf"  !{(22,0)} !L="tg1" !{(23,0)}  !L="tg2"
!{(17,1)} !N="th"  !{(19,1)} !N="ti"  !{(21,1)}  !L="tj"  !{(22,1)} !N="tk" !{(23,1)} !N="tl"
!{(16,2)} !L="tm"  !{(17,2)} !L="tn"  !{(18,2)}  !L="to"  !{(19,2)} !L="tp" !{(20,2)} !N="tq" !{(22,2)} !L="tr"
"x"-@[blue]"c" "c"-@[blue]"ch"  "ch"-"h" "h"-"m" "h"-"n" "h"-"o" 
               "c"-@[blue]"ci"  "ci"-"i" "i"-"p" "i"-"q"
"c"-@[blue]"e" 
"c"-@[blue]"f" "f"-@[blue]"fj"  "fj"-"j"
"c"-@[blue]"g" "g"-@[blue]"gk"  "gk"-"k" "k"-"r" 
               "g"-@[blue]"gl"  "gl"-"l" 
"xx"-@[blue]"cc" 
"cc"-@[blue]"hh"  |>{\ell_1\;}
"cc"-@[blue]"ii"  |>{\ell_2\;\;}
"cc"-@[blue]"ee" 
"cc"-@[blue]"ff" "ff"-@[blue]"jj" |>{\ell_3}
"cc"-@[blue]"gg" "gg"-@[blue]"kk" |>{\ell_4\;} 
                 "gg"-@[blue]"ll" |>{\ell_5\;}
"td1"-"th" |<{\rho_1\;} "th"-"tm" "th"-"tn" "th"-"to" 
"td2"-"ti" |<{\rho_2\;} "ti"-"tp" "ti"-"tq"
"tf"-"tj"  |<{\rho_3\;}
"tg1"-"tk" |<{\rho_4\;} "tk"-"tr" 
"tg2"-"tl" |<{\rho_5\;}
"A":@/^1ex/@[red]"B"_{\text{prune}}
"C":@/^1ex/@[red]"D"_{\text{graft}}
}
$$

\begin{cor}
For $S\in \tr$ and $N\in{\fam I}$ we have equivalences of groupoids
\begin{align}
\treescuts_S&\simeq  (\forest\times_{\fam I}\tr)_S\simeq \forest_{LS}
,\nonumber
\\
\treescuts_N&\simeq \forest_N\times {}_N\tr 
.\nonumber
\end{align}
\end{cor}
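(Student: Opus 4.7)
The strategy is simply to take homotopy fibres of the equivalence
$$\treescuts\;\simeq\;\forest\times_{\fam I}\tr$$
provided by Lemma~\ref{tree:key}, and then simplify the resulting iterated pullbacks using the standard pasting calculus recalled in Section~\ref{sec:groupoids}.

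First I would verify that under this equivalence the functor $r\colon\treescuts\to\tr$ corresponds to the projection $\forest\times_{\fam I}\tr\to\tr$, and accordingly that $L\circ r\colon\treescuts\to\fam I$ corresponds to the canonical map from the pullback to $\fam I$ (equivalently $R\circ w$ on the other side, up to the universal 2-cell). This is immediate from the explicit pruning/grafting description in the proof of Lemma~\ref{tree:key}: pruning returns the subtree $S$ unchanged as its tree component, and identifies the leaves of $S$ with the roots of the forest $\sum T_{\rho_i}$ above the cut. Since equivalences of groupoids preserve homotopy fibres, this already gives the first equivalence in each line, namely $\treescuts_S\simeq(\forest\times_{\fam I}\tr)_S$ and $\treescuts_N\simeq(\forest\times_{\fam I}\tr)_N$.

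Next I would apply the pasting law for homotopy pullbacks to each of these fibres. For the first, pulling back $\forest\times_{\fam I}\tr\to\tr$ along $\name{S}\colon 1\to\tr$ yields
$$(\forest\times_{\fam I}\tr)_S \;\simeq\; \forest\times_{\fam I}\{S\} \;\simeq\; \forest_{LS},$$
since the composite $1\to\tr\xrightarrow{L}\fam I$ selects the object $LS$. For the second, pulling back the pullback $\forest\times_{\fam I}\tr$ along $\name{N}\colon 1\to\fam I$ gives the iterated pullback $\forest\times_{\fam I}\{N\}\times_{\fam I}\tr$, which by a two-stage computation — using that a homotopy pullback over a contractible groupoid is just a product — reduces to $\forest_N\times{}_N\tr$.

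The only step needing any attention is the bookkeeping identification of the maps $r$ and $L\circ r$ with the two canonical projections of the pullback under the equivalence of Lemma~\ref{tree:key}; everything else is a formal manipulation of homotopy pullbacks. I therefore do not anticipate any serious obstacle.
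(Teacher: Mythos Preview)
Your proposal is correct and matches the paper's (implicit) approach: the paper states this corollary with no proof, treating both equivalences as immediate consequences of Lemma~\ref{tree:key} by taking homotopy fibres of the equivalence $\treescuts\simeq\forest\times_{\fam I}\tr$ over $S\in\tr$ and over $N\in\fam I$ respectively, exactly as you describe. Your explicit check that $r$ and $L\circ r$ correspond to the pullback projections, and your pasting-law computations, are precisely the details the paper suppresses.
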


Combining the previous results, we arrive at our main theorem:
\begin{thm}\label{main-thm-grpd}
  We have equivalences of groupoids 
\begin{align*}
\grint{T}{\tr}{\cut(T)}& 
\simeq \grint{S}{\tr}{\forest_{LS}} 
\\&
\simeq \grint{\leaves{N}}{{\fam I}}{\forest_N \times {}_{\leaves{N}}\tr} 
.
\end{align*}
\end{thm}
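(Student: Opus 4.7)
The plan is to assemble the theorem essentially as a direct corollary of the preceding two lemmas; at this point almost all of the real content has already been established.

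First I would invoke Lemma~\ref{tree:doublecounting}, which (via two applications of double-counting over the maps $m$ and $r$, respectively $L\circ r$) gives the chain of equivalences
$$
\grint{T}{\tr}{\cut(T)} \;\simeq\; \treescuts \;\simeq\; \grint{S}{\tr}{\treescuts_S} \;\simeq\; \grint{N}{\fam I}{\treescuts_N}.
$$
These equivalences are all over $\forest\times\tr$ thanks to the canonical maps $w,r$ (and hence $(w,r)$) defined in \eqref{treecutmaps}.

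Next I would substitute the identification of fibres obtained in the Corollary to Lemma~\ref{tree:key}, namely $\treescuts_S \simeq \forest_{LS}$ and $\treescuts_N \simeq \forest_N \times {}_{\leaves N}\tr$. Plugging the first into the middle expression and the second into the right-hand expression immediately yields
$$
\grint{T}{\tr}{\cut(T)} \;\simeq\; \grint{S}{\tr}{\forest_{LS}} \;\simeq\; \grint{\leaves N}{\fam I}{\forest_N \times {}_{\leaves N}\tr},
$$
which is the statement of the theorem.

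The essential work has already been done in Lemma~\ref{tree:key}, which was the only step requiring a genuinely non-formal argument (the grafting/pruning equivalence). Every other manipulation here is a formal consequence of the Grothendieck construction (Proposition~\ref{prop:groth}) and the compatibility of homotopy fibres with homotopy pullbacks, so there is no real obstacle left at this stage; one only needs to verify that the substitutions of fibres preserve the map to $\forest\times\tr$, which is immediate from the definitions of $w$ and $r$.
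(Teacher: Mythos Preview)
Your proposal is correct and matches the paper's approach exactly: the paper simply states ``Combining the previous results, we arrive at our main theorem'' and presents Theorem~\ref{main-thm-grpd}, meaning precisely the combination of Lemma~\ref{tree:doublecounting} with the Corollary to Lemma~\ref{tree:key} that you describe. Your write-up is, if anything, slightly more explicit about which substitutions are made.
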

We can regard this as an equivalence of groupoids over $\forest\times\tr$.  For fixed $T$, the map from ${\cut(T)}$ to $\forest\times\tr$
is precisely
$$
\sum_{c\in \cut(T)}\xymatrix{1  \ar[rr]^-{\name{(P_c,S_c)}}&&\forest\times\tr} .
$$
To emphasise this, we can reformulate the result as
\begin{equation}\label{fdbgrpd}
\grint{T}{\tr}\sum_{c\in \cut(T)} \{P_c\} \times \{S_c\}
\simeq \grint{\leaves{N}}{{\fam I}}{\forest_N\times {}_{\leaves{N}}\tr}
\end{equation}



Extracting the algebraic version of the \fdb formula \ref{main-thm-alg}
from \ref{main-thm-grpd} will be a matter of taking cardinality in a certain sense,
which we explain in the next section.

If we take the fibres of the equivalence
given in Theorem \ref{main-thm-grpd},
over a fixed colour $v\in I$,
we obtain:
\begin{cor}\label{main-cor-grpd}
  We have equivalences of groupoids 
\begin{align*}
\grint{T}{\tr_v}{\cut(T)}& 
\simeq \grint{\leaves{N}}{{\fam I}}
{\forest_N \times {}_{\leaves{N}}\tr_v} 
.
\end{align*}
\end{cor}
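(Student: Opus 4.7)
The plan is to derive Corollary~\ref{main-cor-grpd} directly from Theorem~\ref{main-thm-grpd} by taking homotopy fibres over $v\in I$. The equivalence of Theorem~\ref{main-thm-grpd} is an equivalence of groupoids over $\forest\times\tr$, so composing the second projection $\forest\times\tr\to\tr$ with the root map $R\colon\tr\to I$ endows both sides with a canonical morphism to $I$ with respect to which the equivalence is compatible. Since homotopy pullback preserves equivalences, pulling back along $\name{v}\colon 1\to I$ yields an equivalence of the resulting homotopy fibres.

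It remains to identify each fibre. The key formal fact I will use is that homotopy fibres commute with homotopy sums: if $X\simeq \grint{b}{B}{F(b)}$ carries a morphism to $I$ factoring through the projection $X\to B$, then $X_v\simeq \grint{b}{B_v}{F(b)}$ for every $v\in I$. This is an immediate consequence of Proposition~\ref{prop:groth} combined with pullback basechange, and reflects the fact that the Grothendieck construction is a homotopy colimit.

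Applied to the right-hand side, the map to $\tr$ factors through the second factor ${}_{\leaves{N}}\tr$ of each summand, so restriction by root colour $v$ touches only that factor and yields $\grint{\leaves{N}}{{\fam I}}{\forest_N \times {}_{\leaves{N}}\tr_v}$. On the left-hand side, the map to $\tr$ in the equivalence is the map $r\colon\treescuts\to\tr$ of Lemma~\ref{tree:key}, sending a cut $c\colon S\rightarrowtail T$ to the subtree $S$ below the cut; since cuts are root-preserving by definition, $R(S)=R(T)$, and restricting by $R(S)=v$ is the same as restricting by $R(T)=v$. Applying the fibre principle again now gives $\grint{T}{\tr_v}{\cut(T)}$, completing the identification.

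The only non-formal input is the identity $R(S)=R(T)$ for a cut, which is immediate from the cut being a root-preserving embedding; everything else is routine bookkeeping with homotopy pullbacks and the Grothendieck construction, so no serious obstacle arises.
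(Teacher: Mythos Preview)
Your proposal is correct and follows exactly the approach indicated in the paper, which simply says ``take the fibres of the equivalence given in Theorem~\ref{main-thm-grpd} over a fixed colour $v\in I$''. You have merely unpacked this one-line justification: the map to $I$ is via $R\circ r$ on the $\tr$-factor, the observation $R(S)=R(T)$ for a root-preserving cut (implicit in the commutativity of diagram~\eqref{treecutmaps}) lets you rewrite this as $R\circ m$ so that the left-hand homotopy sum restricts to one indexed by $\tr_v$, and on the right-hand side the fibre only affects the ${}_N\tr$ factor since the map to $I$ ignores $\forest_N$.
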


\section{Groupoid cardinality}
\label{Sec:card}

\begin{blanko}{Finiteness conditions and cardinality.}\label{finite}
  A groupoid $X$ is called {\em finite} 
  when $\pi_0 (X)$ is a finite set and each $\pi_1(x)$ is a finite group.  
A morphism of groupoids is
  called finite when all its fibres are finite.

  The {\em cardinality} \cite{Baez-Dolan:finset-feynman} of a finite groupoid (sometimes called {\em groupoid
  cardinality} or {\em homotopy cardinality} if there is any danger of confusion) is the nonnegative
  rational number given by the formula
  $$
  \norm{X} := \sum_{x\in \pi_0 X} \frac{1}{\norm{\Aut(x)}} .
  $$
  Here $\norm{\Aut(x)}$ denotes the order of the vertex group at $x$.  This is
  independent of the choice of the $x$ in the same connected component since an
  arrow between two choices induces an isomorphism of vertex groups.  
%
It is clear that equivalent groupoids have the same cardinality.

If $X$ is a finite set
  considered as a groupoid, then the groupoid cardinality coincides with the set
  cardinality.  If $G$ is a finite group considered as a one-object groupoid, then the
  groupoid cardinality is the inverse of the order of the group. 

We have the following fundamental formulae for cardinality of sums, products  and homotopy quotients 
of groupoids:
\begin{align*}
  \norm{X+Y} & = \norm X + \norm Y\\
  \norm{X\times Y} & = \norm X \times \norm Y\\
  \norm{X/G} &= \norm{X}/\norm{G},
\end{align*}
where $X$ and $Y$ are finite groupoids and $G$ is a finite group acting on $X$.
\end{blanko}


%

 \begin{blanko}{Cardinalities of families.}\label{card-fam}
  For the sake of taking cardinalities we shall need the following
`numerical' description of the groupoid $\fam I$ of families of objects in $I$, cf.\ \ref{fam}.  

Let $v_1,\ldots,v_s$ be representatives of the isoclasses in $I$. Then
every family
$$N:X\to I$$ is isomorphic to a
sum (in the category of sets over $I$) of families of the kind $\name{v_i}:1 \to I$.
Hence for uniquely determined natural numbers $n_i$
we have
$$N \cong \sum_{i=1}^s n_i \name{v_i} .$$
It follows that $$\pi_0(\fam I) \simeq \N^s.$$

We compute the vertex group.  The automorphism group of
$\name{v_i}:1\to I$ is $\Aut(v_i)$ and that of $n_i\name{v_i}$ is
$n_i! \Aut(v_i)^{n_i}$,
since each point contributes with a factor $\Aut(v_i)$, and since the
points can also be permuted.
Altogether, we have
\begin{equation}\label{autN}
\Aut(N)\cong\prod_{i=1}^s{n_i! \Aut(v_i)^{n_i}},
\end{equation}
and the groupoid $\fam I$ can be described as
$$
\fam I \simeq \sum_{(n_1,\ldots,n_s)\in \N^s} 
\;\frac{1}{\displaystyle \prod_i{n_i! \Aut(v_i)^{n_i}}} .
$$

\end{blanko}

\begin{blanko}{Relative cardinality.}\label{formalcard}
%
Relative cardinality
refers to the situation where one
  groupoid $X$ is relatively finite over another 
  groupoid,   i.e.~we have a morphism $p:X\to B$ with finite fibres.
  This notion is from \cite{Baez-Hoffnung-Walker:0908.4305}.
  In   this situation we define the {\em relative cardinality} of $X$ relative to 
  $B$,
  $$
  \norm{p} := \norm{X}_B := \sum_{b\in\pi_{0}B} \frac{\norm{X_{b}}}{\norm{\Aut(b)}}
  \cdot \delta_{b} ,
  $$ 
in the completion of the $\Q$-vector space 
  spanned by symbols $\delta_{b}$ for $b\in\pi_{0}(B)$.
The notation $\norm{X}_B$ assumes the
 morphism $X \to B$ is clear from the 
context.
  Since the morphism has finite fibres $X_b$, the 
  coefficients are well-defined nonnegative rational numbers.

  The vector space spanned by the $\delta_b$ is isomorphic to the space of functions 
  $\pi_0 B \to\Q$ with finite support, and its completion is the function space
  $\Q^{\pi_0 B }$. For each $b\in \pi_0 B$ we identify the cardinality of the inclusion $\name b:1 \to B$ with a function
\begin{eqnarray*}
    \delta_b=\norm {1}_{\name b}
\ : \ \pi_0 B & \longrightarrow & \Q \\
    x & \longmapsto & \begin{cases} 1 & \text{ if } x \simeq b \\ 0 & 
    \text{ otherwise. }\end{cases}
\end{eqnarray*}
Hence we identify the relative cardinality of $X\to B$ with the function 
\begin{eqnarray*}
\pi_0 B  & \longrightarrow & \Q \\
    b & \longmapsto & \norm{X_{b}}/\norm{\Aut(b)} .
\end{eqnarray*}
\end{blanko}

\begin{blanko}{Properties of relative cardinality.}
    When taking relative cardinality of a product 
    $p\times p'\colon X\times X'\to B\times B'$, the formal symbols are indexed by $(b,b')\in 
    \pi_0 B \times \pi_0 B'  \simeq \pi_0(B\times B')$.  We shall then
    use notation $\delta_{b} \otimes \delta_b'$ instead of 
    $\delta_{(b,b')}$, so  that $|p\times p'|=|p|\otimes |p'|$. 
\end{blanko}

Consider the groupoid morphism $X\to X/G
$ given by  the action of a finite group on a groupoid. Then we have
$$
\norm{X/G}_{X/G}=\frac{\norm{X}_{X/G}}{\norm{G}}
$$
\begin{lemma}\label{actionformalcardinality}
  For any action of a finite group $G$ on a groupoid $X$ and a finite morphism $X/G\to A$, we have
  $$
  \norm{X/G}_A = \norm{X}_A/\norm{G}
  $$
  where
  $\norm G$ denotes the order of the group $G$.
\end{lemma}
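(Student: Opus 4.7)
The plan is to reduce the claim to a pointwise identity over $\pi_0 A$ and then invoke the cardinality formula $\norm{X/G} = \norm{X}/\norm{G}$ from \ref{finite}. By the definition of relative cardinality given in \ref{formalcard}, both sides of the asserted identity are functions $\pi_0 A \to \Q$, so it suffices to show that for each $a \in \pi_0 A$ one has
\[
\norm{(X/G)_a} \;=\; \norm{X_a}/\norm{G},
\]
where $X_a$ denotes the homotopy fibre over $a$ of the composite $X \to X/G \to A$, and $(X/G)_a$ the homotopy fibre of $X/G \to A$.

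Next I would identify these two fibres. Writing $Y := X/G$, the associativity of homotopy pullback applied to the factorisation $X \to Y \to A$ produces an equivalence $X_a \simeq X \times_Y Y_a$. Since $X \to A$ factors through the $G$-invariant morphism $Y \to A$, the $G$-action on $X$ restricts to a $G$-action on $X_a$. The geometric input I would invoke is that the quotient map $X \to Y = X/G$ is a principal $G$-bundle (essentially the defining universal property of the homotopy quotient), and such bundles are stable under homotopy pullback. Consequently $X_a \to Y_a$ is a principal $G$-bundle too, i.e.\ $X_a/G \simeq Y_a$.

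Finally, $Y_a$ is finite by the hypothesis that $X/G \to A$ is a finite morphism, and $X_a$ is finite because its fibres over $Y_a$ are equivalent to $G$ as a discrete set, hence finite. Applying the homotopy-quotient cardinality identity $\norm{X_a/G} = \norm{X_a}/\norm{G}$ from \ref{finite} to the equivalence $X_a/G \simeq Y_a$ gives $\norm{Y_a} = \norm{X_a}/\norm{G}$. Multiplying by $\delta_a/\norm{\Aut(a)}$ and summing over $a \in \pi_0 A$ then yields the desired formula $\norm{X/G}_A = \norm{X}_A/\norm{G}$.

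The main subtle point, and hence the main obstacle to a fully rigorous argument, is the commutation of homotopy quotients with homotopy pullback used to derive $X_a/G \simeq Y_a$; although standard in the $2$-categorical setting of groupoids, it must be invoked carefully so that the $G$-action on the pullback $X_a$ matches the one inducing the equivalence. Everything else is bookkeeping of fibres and an appeal to the already-established formula of \ref{finite}.
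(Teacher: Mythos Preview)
The paper does not supply a proof of this lemma; it is stated immediately after the special case
\[
\norm{X/G}_{X/G}=\norm{X}_{X/G}/\norm{G},
\]
which follows from the observation that every homotopy fibre of $X\to X/G$ is the discrete set $G$, and is then left without further argument.

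Your fibrewise approach is correct. The commutation $(X/G)_a\simeq X_a/G$ that you flag as the subtle point is indeed the crux; it holds because homotopy colimits in $\Grpd$ are stable under homotopy pullback (equivalently, because $X\to X/G$ is a principal $G$-bundle and such bundles pull back). Once that is granted, the remainder is exactly the bookkeeping you describe.

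A route closer in spirit to the paper's organisation is to combine the displayed special case with the transitivity property of Lemma~\ref{relrel}: applying the substitution $\delta_y\mapsto\delta_{t(y)}$ (for $t\colon X/G\to A$) to both sides of $\norm{X/G}_{X/G}=\norm{X}_{X/G}/\norm{G}$ yields the general formula directly. This avoids having to identify the fibres over $A$ explicitly, needing only the definitional fact that the fibres of $X\to X/G$ are $G$; your argument is more self-contained but requires the slightly heavier input that homotopy quotients commute with arbitrary homotopy pullback.
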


We need the following transitivity property of relative cardinality:
\begin{lemma}\label{relrel}
  Given groupoid morphisms $X \stackrel p\to B \stackrel t \to I$ with finite fibres,
the relative cardinality of $p$ is obtained from those of the restrictions $p_v:X_v\to B_v$,
$$
\norm{p} = \norm{X}_B = \sum_{v\in \pi_0 I} \frac{\norm{p_v}}{\norm{\Aut(v)}} .
$$  
Also the relative
  cardinality of $X$ over $I$ is obtained from the relative cardinality over
  $B$ by substituting $\delta_{t(b)}$ for each $\delta_b$.  That is:
$$
\norm{X}_I = \sum_{b\in \pi_0 B} \frac{\norm{X_b}}{\norm{\Aut(b)}} \; \delta_{t(b)} .
$$  
\end{lemma}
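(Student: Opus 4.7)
My plan is to prove the second formula directly; the first formula is then a regrouping of the defining sum for $\norm{p}$ according to the value $v=t(b)$ in $\pi_0 I$, together with the identification $\delta_{b'}\mapsto \delta_b$ along the canonical map $\pi_0 B_v\to\pi_0 B$. Unwinding the definition of relative cardinality, the second formula amounts to the identity
$$\frac{\norm{X_v}}{\norm{\Aut(v)}} \;=\; \sum_{b\in \pi_0 B,\; t(b)\simeq v} \frac{\norm{X_b}}{\norm{\Aut(b)}}$$
for each $v\in\pi_0 I$, obtained by comparing coefficients of $\delta_v$ on the two sides.

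To establish this identity, I first use pasting of homotopy pullbacks to identify $X_v \simeq X\times_B B_v$, so that the restricted morphism $p_v\colon X_v\to B_v$ has the same fibres $X_b$ as $p$. Applying Proposition~\ref{prop:groth} to $p_v$ gives
$$\norm{X_v} \;=\; \sum_{b'\in\pi_0 B_v}\frac{\norm{X_{b'}}}{\norm{\Aut_v(b')}},$$
where $\Aut_v(b')$ denotes the vertex group in the fibre $B_v$, which is in general a proper subgroup of the vertex group $\Aut(b')$ in $B$. The remaining task is to reindex this sum over $\pi_0 B$.

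Unfolding the homotopy fibre $B_v$, its objects are pairs $(b,\phi)$ with $\phi\colon t(b)\xrightarrow{\sim} v$, and the group $\Aut(v)$ acts on the choices of $\phi$ by post-composition. A standard orbit--stabiliser computation applied to this action shows that the fibre of $\pi_0 B_v\to\pi_0 B$ over a class $[b]$ with $t(b)\simeq v$ consists of $\norm{\Aut(v)}/\norm{\operatorname{im}(t_*)}$ components, each with vertex group $\ker(t_*)$ of order $\norm{\Aut(b)}/\norm{\operatorname{im}(t_*)}$, where $t_*\colon\Aut(b)\to\Aut(v)$ is the homomorphism induced by $t$. Substituting these quantities into the sum above makes the factors $\norm{\operatorname{im}(t_*)}$ cancel, and dividing by $\norm{\Aut(v)}$ delivers the required identity. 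The main obstacle is precisely this orbit--stabiliser bookkeeping: one must correctly track how automorphisms in $B$ may alter $\phi$ (and hence fail to be automorphisms in $B_v$) and how different choices of $\phi$ contribute distinct components above the same $[b]$; once the fibration picture $\ker(t_*)\hookrightarrow\Aut(b)\twoheadrightarrow\operatorname{im}(t_*)\hookrightarrow\Aut(v)$ is in place, the arithmetic works out automatically.
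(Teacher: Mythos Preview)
The paper states this lemma without proof, so there is nothing to compare against; your argument supplies the details the authors left implicit, and it is correct. The key identity
\[
\frac{\norm{X_v}}{\norm{\Aut(v)}} \;=\; \sum_{\substack{b\in\pi_0 B\\ t(b)\simeq v}} \frac{\norm{X_b}}{\norm{\Aut(b)}}
\]
is exactly what needs checking, and your orbit--stabiliser bookkeeping on the homotopy fibre $B_v$ (counting $\norm{\Aut(v)}/\norm{\operatorname{im}(t_*)}$ components over $[b]$, each with vertex group $\ker(t_*)$) is the right way to unwind it by hand.

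One small remark: your treatment of the first formula is a bit compressed. It does not follow \emph{from} the second formula (which lives in $\Q^{\pi_0 I}$ while the first lives in $\Q^{\pi_0 B}$); rather, both follow from the same coefficient identity you establish. You clearly understand this, since you invoke the identification $\delta_{b'}\mapsto\delta_b$ along $\pi_0 B_v\to\pi_0 B$, but it would read more cleanly to say explicitly that the orbit--stabiliser computation yields, for each $b$ with $t(b)\simeq v$,
\[
\frac{1}{\norm{\Aut(v)}}\sum_{b'\mapsto b}\frac{\norm{X_{b'}}}{\norm{\Aut_v(b')}}=\frac{\norm{X_b}}{\norm{\Aut(b)}},
\]
and that this single identity, summed appropriately, gives both formulas. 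A slightly more conceptual alternative, in the spirit of the paper, would be to note that this identity is the cardinality shadow of the Fubini-type equivalence $\int^{b\in B}X_b\simeq\int^{v\in I}\int^{b\in B_v}X_b$ stated just after Proposition~\ref{prop:groth}; your explicit computation is precisely what that equivalence says at the level of $\pi_0$ and $\pi_1$.
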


In particular, any groupoid can be measured over itself via the identity
morphism $\Id: X \to X$:
$$
\norm{X}_X = \sum_{x\in \pi_0 X} \frac{1}{\norm{\Aut(x)}} \; \delta_x
$$
Hence we get the following useful result.
\begin{cor}\label{formalcardcor}
  For $p:X\to B$ we have
$$
\norm{X}_B = \sum_{x\in \pi_0 X} \frac{1}{\norm{\Aut(x)}} \; \delta_{p(x)} .
$$  
\end{cor}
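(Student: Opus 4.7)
The plan is to deduce the corollary as an immediate consequence of Lemma \ref{relrel} applied to a cleverly chosen composition. Specifically, I would factor $p:X\to B$ through the identity on $X$, viewing the data as
\[
X \stackrel{\mathrm{Id}}{\longrightarrow} X \stackrel{p}{\longrightarrow} B.
\]
In the notation of Lemma \ref{relrel}, this is the situation $X \stackrel{p'}{\to} B' \stackrel{t'}{\to} I'$ with $p' = \mathrm{Id}_X$, $B' = X$, and $I' = B$, $t' = p$.

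First I would record the baseline formula that appears in the paragraph just preceding the statement, namely
\[
\norm{X}_X \;=\; \sum_{x\in\pi_0 X}\frac{1}{\norm{\Aut(x)}}\,\delta_x,
\]
which is simply the definition of relative cardinality applied to the identity morphism: the fibre of $\mathrm{Id}_X$ over $x$ is contractible, so its cardinality is $1$, and the automorphism group at $x$ is $\Aut(x)$. Then I would invoke the substitution part of Lemma \ref{relrel}, which asserts that passing from relative cardinality over $B'$ to relative cardinality over $I'$ is achieved by substituting $\delta_{t'(b')}$ for each $\delta_{b'}$. Here this substitution replaces $\delta_x$ by $\delta_{p(x)}$, yielding exactly
\[
\norm{X}_B \;=\; \sum_{x\in\pi_0 X}\frac{1}{\norm{\Aut(x)}}\,\delta_{p(x)}.
\]

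I do not anticipate any real obstacle: the whole content is bookkeeping with Lemma \ref{relrel}. The only subtle point worth flagging in the write-up is that the composite $p\circ\mathrm{Id}_X$ is literally $p$, so the relative cardinality of $\mathrm{Id}_X$ reindexed along $p$ genuinely computes $\norm{X}_B$ and not some other invariant; this follows from the transitivity statement of Lemma \ref{relrel} applied in the degenerate case where the first morphism is the identity.
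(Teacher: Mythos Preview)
Your proposal is correct and is exactly the argument the paper intends: the sentence ``Hence we get the following useful result'' immediately after the displayed formula for $\norm{X}_X$ signals precisely the factorisation $X \stackrel{\Id}{\to} X \stackrel{p}{\to} B$ together with the substitution clause of Lemma~\ref{relrel}. There is nothing to add.
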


\begin{blanko}{Power series.}\label{Gascard}
  Both the power series $A$ from Section~\ref{sec:clas} and the Green function in \ref{tree:green}
are examples of the following general situation.
For $X$ a groupoid with
finite vertex groups, 
consider the relative cardinality of the map 
$X\into \fam X$, the full inclusion
of $X$ into the groupoid of families of objects in $X$ (cf.~\ref{fam}).
We have
$$\sum_{x\in\pi_0 \widetilde X}\frac{\norm{X_x}}{\norm{\Aut(x)}}\cdot \delta_x
=
\sum_{x\in\pi_0 X}\frac1{\norm{\Aut(x)}}\cdot \delta_x ,
$$
since the summand is zero when $x\notin\pi_0 X$.
As an element of the ring
$\Q^{\pi_0 \widetilde X}$, this cardinality is
\begin{eqnarray*}
  \pi_0\widetilde X & \longrightarrow & \Q \\
    x & \longmapsto & 
\begin{cases} 1/{\norm{\Aut(x)}} & \text{ if } x \in \pi_0 X \\
 0 &    \text{ otherwise. }\end{cases}
\end{eqnarray*}

We observe
the natural isomorphisms
$$
\Q^{\pi_0 \widetilde X} \cong \text{Sym} (\Q^{\pi_0 X})\cong 
\Q[[\delta_x
]]_{x\in\pi_0 X}
$$
between the ring of functions and the power series in symbols $\delta_x$ 
for $x\in\pi_0 X$. This restricts to an isomorphism between the subring of functions with finite support and the polynomial ring.



\end{blanko}

\section{The \fdb formula in the bialgebra of trees}
\label{fdbalg}

The polynomial functor $P$, represented by  $
  I \lTo E \rTo B \rTo I
  $, remains fixed in this section, and we denote the groupoids of $P$-trees and $P$-forests by $\tr$ and $\forest$ respectively.

\begin{blanko}{The bialgebra of $P$-trees  (cf.~\cite{Kock:1109.5785}).}
Consider the bialgebra 
given by the free algebra on the set 
of isomorphism classes of $P$-trees
$$\treebialg=\Q[\delta_T]_{T\in\pi_0\tr}.$$
The constituent trees of a $P$-forest $F$ define a monomial $\delta_F$, and these monomials form a linear basis of $\treebialg$. The  comultiplication structure is given by
$$\Delta(\delta_T)=\sum_{c\in \cut(T)} \delta_{P_c} \otimes \delta_{S_c}.$$
The bialgebra of~\ref{treebialg} is just the special case where $P$ is the exponential functor of~\ref{naked}.
\end{blanko}

\begin{blanko}{Definition of the Green functions of trees.}
In the completion of $\treebialg$,  
the power series ring, we define the total Green function 
  as the relative cardinality of $\tr \to 
  \fam\tr = \forest$:
  $$
  G := \sum_{T \in \pi_0 \tr} 
  \frac{\delta_T}{\norm{\Aut(T)}}\;\;\in\;\;\Q[[\delta_T]]_{T\in\pi_0\tr} .
  $$

In analogy with the situation in QFT, where there is
  one Green function for each possible residue (interaction label) in the 
  theory, we also define an individual Green function for each possible 
(isomorphism class of)
root colour
   $v\in \pi_0 I$,
   $$
   G_v := \sum_{T \in \pi_0(\tr_v)} \frac{\delta_T}{\norm{\Aut_v(T)}} .
   $$
   Here the automorphism group $\Aut_v(T)$ consists of those automorphisms
   of $T$ which fix the root colour $v$.  
This is the relative cardinality of the inclusion $\tr_v \to\tr \to \forest$.

It follows from Lemma \ref{relrel} that we have the relationship
$$
G = \sum_{v\in \pi_0 I} \frac{G_v}{\norm{\Aut(v)}}.
$$

Let $s := \norm{\pi_0 I}$ be the number of colours, and
let $\fat n = (n_1,\ldots,n_s) \in \N^s$ be a multiindex, parametrising an 
isoclass of objects $N$ in $\fam I$.  
Consider 
the  relative cardinality of the inclusion of the homotopy fibre
${}_N\!\tr
\to \tr$,
$$G_{\fat n}:=\sum_{T \in \pi_0({}_N \!\tr)}\frac{\delta_T}{|\Aut_N(T)|}.$$
We also consider the summands of
 the Green
function corresponding to all trees with $n_v$ leaves of each colour $v\in \pi_0 I$,
$$g_{\fat n}:=\sum_{T \in \pi_0 \tr,\;LT\cong N}\frac{\delta_T}{|\Aut T|}.$$
This is the relative cardinality of 
the {\em full subcategory} of $\tr$ whose objects are those trees $T$ with leaf profile $N$.
This  is equivalent to the homotopy quotient ${}_N\!\tr/\Aut N$ of the
homotopy fibre by the canonical action of $\Aut N$. Clearly, 
%
\begin{eqnarray*}
g_{\fat n}&=&G_{\fat n}/|\Aut N|
.\end{eqnarray*}
and hence
$$
G = \sum_{\fat n \in \N^s} g_{\fat n} .
$$



\end{blanko}

The comultiplication $\Delta$ extends to power series, and we can extract an 
algebraic \fdb formula from of our Theorem \ref{main-thm-grpd}.

\begin{thm}\label{main-thm-alg}
  The following \fdb formula holds for the Green function in the bialgebra of 
  trees.
\begin{equation}\label{fdbcard}
\Delta(G)=\sum_{\fat n \in \N^s} G^{\fat n}\otimes g_{\fat n}.
\end{equation}
Here $G^{\fat n}$ is to be interpreted as the product
$$
G^{\fat n} = \prod_{v\in \pi_0 I} G_v ^{n_v} .
$$
\end{thm}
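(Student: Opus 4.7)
The plan is to derive the algebraic formula by taking relative cardinality over $\forest\times\tr$ of both sides of the groupoid equivalence \eqref{fdbgrpd} from Theorem~\ref{main-thm-grpd}. Once this step is justified and both sides are simplified using the general cardinality machinery of Section~\ref{Sec:card}, the formula will drop out.

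First I would treat the left-hand side. The map $\grint{T}{\tr}{\cut(T)}\to\forest\times\tr$ sends a cut $c\colon S\rightarrowtail T$ to $(P_c,S_c)$, so by Corollary~\ref{formalcardcor} its relative cardinality is
\[
\sum_{T\in\pi_0\tr}\frac{1}{|\Aut(T)|}\sum_{c\in\cut(T)}\delta_{P_c}\otimes\delta_{S_c}.
\]
Since $\Delta$ extends continuously to the completion of $\treebialg$ and acts on each generator $\delta_T$ exactly by the inner sum, this expression is precisely $\Delta(G)$.

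Next I would treat the right-hand side. Applying Lemma~\ref{relrel} to the projection $\int^{N}(\forest_N\times{}_N\tr)\to\fam I$, and using that relative cardinality is multiplicative on products, I get
\[
\left|\grint{N}{\fam I}{\forest_N\times{}_N\tr}\right|_{\forest\times\tr}
=\sum_{N\in\pi_0\fam I}\frac{1}{|\Aut(N)|}\,|\forest_N|_\forest\otimes|{}_N\tr|_\tr.
\]
Reindexing via $\pi_0\fam I\simeq\N^s$ from \ref{card-fam}, it remains to identify each factor. For the tree factor, ${}_N\tr/\Aut(N)$ is (equivalent to) the full subcategory of $\tr$ consisting of trees with leaf profile of type $\mathbf n$, whose relative cardinality over $\tr$ is $g_{\mathbf n}$ by definition; hence Lemma~\ref{actionformalcardinality} yields $|{}_N\tr|_\tr/|\Aut(N)|=g_{\mathbf n}$. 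For the forest factor, Lemma~\ref{Fn=T^n} gives an equivalence $\forest_N\simeq\prod_v \tr_v^{n_v}$ over $\forest$, where the map to $\forest$ is the disjoint-union/monoidal-product map; by multiplicativity of relative cardinality with respect to the monoidal structure on $\forest$, this yields $|\forest_N|_\forest=\prod_v G_v^{n_v}=G^{\mathbf n}$.

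The main obstacle, and the only point beyond routine bookkeeping, is this last identification $|\forest_N|_\forest=G^{\mathbf n}$: one must verify that the equivalence of Lemma~\ref{Fn=T^n} is compatible, as a morphism to $\forest$, with the symmetric monoidal structure on families so that the relative cardinality of a product factors as the product of relative cardinalities in the power series ring. Once this is in hand, combining the two sides gives $\Delta(G)=\sum_{\mathbf n\in\N^s}G^{\mathbf n}\otimes g_{\mathbf n}$, which is the claimed \fdb formula.
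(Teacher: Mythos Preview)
Your proposal is correct and follows essentially the same route as the paper: both take relative cardinality over $\forest\times\tr$ of the equivalence in Theorem~\ref{main-thm-grpd}, identify the left-hand side as $\Delta(G)$, and unpack the right-hand side via the description of $\pi_0\fam I$ and the identification of $\norm{\forest_N}$ and $\norm{{}_N\tr}/\norm{\Aut N}$ with $G^{\fat n}$ and $g_{\fat n}$. The only discrepancy is bibliographic: the product decomposition $\forest_N\simeq\prod_v \tr_v^{n_v}$ is not Lemma~\ref{Fn=T^n} itself but its consequence Lemma~\ref{famprod} (whose Corollary is exactly your ``main obstacle'' $\norm{\forest_N}_\forest=G^{\fat n}$), so you should cite that instead.
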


To prove the theorem we first need a result about forests.
Recall that the multiindices $\fat n$ classify the isomorphism classes of objects
$N \in \fam I$.

\begin{lemma}\label{famprod}
Let $N:X\to I$ be an object of $\fam I$ of class $\fat n=(n_1,\dots, n_s)$.
Then 
$$
\forest_N\simeq\prod_{i=1}^s{\tr_{v_i}}^{n_i}.
$$
\end{lemma}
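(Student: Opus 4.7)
The plan is to chain together Lemma \ref{Fn=T^n} with the canonical decomposition of $N$ worked out in \ref{card-fam}. First, by Lemma \ref{Fn=T^n} there is an equivalence
$$
\forest_N \;\simeq\; \Grpd_{/I}(\leaves{N},\tr),
$$
so it suffices to compute the right-hand mapping groupoid.

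Next, I would invoke the decomposition from \ref{card-fam}: since $N$ has class $\fat n=(n_1,\dots,n_s)$, there is an isomorphism in $\fam I$ (and in particular in $\Grpd_{/I}$)
$$
\leaves N \;\cong\; \sum_{i=1}^{s} n_i \cdot \name{v_i}.
$$
Because $\Grpd_{/I}(-,\tr)$ sends coproducts in the first variable to products of mapping groupoids, this yields
$$
\Grpd_{/I}(\leaves N,\tr) \;\simeq\; \prod_{i=1}^{s} \Grpd_{/I}(\name{v_i},\tr)^{n_i}.
$$
The replacement of $N$ by its decomposition only induces an equivalence of fibers, which is legitimate since the fiber functor is invariant under equivalence in the base.

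Finally, for each colour $v_i$ the mapping groupoid $\Grpd_{/I}(\name{v_i},\tr)$ is, by the very definition of homotopy fibre, equivalent to $\tr_{v_i}$ (this is the universal property of the pullback defining $\tr_{v_i}$, already used in the proof of \ref{Fn=T^n}). Combining the three equivalences delivers
$$
\forest_N \;\simeq\; \prod_{i=1}^{s}\tr_{v_i}^{\,n_i},
$$
as required.

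The only point that deserves care is the compatibility of the coproduct decomposition of $\leaves N$ with the passage between $\fam I$ and $\Grpd_{/I}$: the symbol $\name{v_i}$ denotes a discrete one-point family, and summing $n_i$ copies of it inside the slice groupoid $\Grpd_{/I}$ agrees (up to canonical equivalence) with the coproduct taken inside $\Bij_{/I}=\fam I$, because $\Bij$ sits as a full subcategory of $\Grpd$ closed under finite coproducts. Once this compatibility is noted, the rest is a formal consequence of the hom/coproduct adjunction and of Lemma \ref{Fn=T^n}.
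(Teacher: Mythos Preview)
Your proof is correct and follows essentially the same route as the paper: invoke Lemma~\ref{Fn=T^n}, decompose $N$ as $\sum_i n_i\,\name{v_i}$ via \ref{card-fam}, convert the coproduct to a product in the mapping groupoid, and then identify $\Grpd_{/I}(\name{v_i},\tr)\simeq\tr_{v_i}$. The only cosmetic difference is that the paper phrases this last identification through the adjunction $\name{v_i}{}_{!}\dashv\name{v_i}^{*}$ of \eqref{slice-adj}, whereas you invoke the universal property of the homotopy fibre directly; these are the same fact.
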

\begin{proof}
Combining  Lemma~\ref{Fn=T^n} with \ref{card-fam}, we find
\begin{eqnarray*}
\forest_N&\simeq&
\Grpd_{/I}(X,R)
\\
&\simeq&
\Grpd_{/I}\left(\sum_{i=1}^s {n_i\;\name{v_i}}
,R\right)
\\
&\simeq&
\prod_{i=1}^s\Grpd_{/I}\left(
 {\name{v_i}}
,R\right)^{n_i}.
\end{eqnarray*}
Now $\name{v_i}$ is the `lowershriek' $\name{v_i}\lowershriek(1)$ and so 
by adjunction~\eqref{slice-adj} we have
$$
\simeq\;\;
\prod_{i=1}^s\Grpd (
1
,\name{v_i}\upperstar R )^{n_i}
\;\;\simeq\;\;\prod_{i=1}^s{\tr_{v_i}}^{n_i}.
$$
\end{proof}
\begin{cor}
$$
\norm{\forest_N}=G^{\fat n}=\prod_{i=1}^s{G_{v_i}}^{n_i}.
$$
\end{cor}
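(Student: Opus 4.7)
The plan is to simply take relative cardinality on both sides of the equivalence of groupoids $\forest_N \simeq \prod_{i=1}^s \tr_{v_i}^{n_i}$ established in Lemma \ref{famprod}, and track the formal symbols. First, I would interpret $\norm{\forest_N}$ as the relative cardinality of the natural map $\forest_N \to \forest$, namely the composite of the inclusion of the homotopy fibre of $R\colon\forest\to\fam I$ with the identity on $\forest$. Under the equivalence of Lemma \ref{famprod}, this map corresponds to the disjoint-union map $\prod_{i=1}^s \tr_{v_i}^{n_i} \to \forest$ that sends a tuple of trees to the forest they constitute; this compatibility is immediate from the construction, since the equivalence of Lemma \ref{famprod} is built from the adjunction \eqref{slice-adj} between the family functor $\fam{(-)}$ and its inverse.

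Next, I would apply the product property of relative cardinality recorded in \ref{formalcard}: the relative cardinality of
$$
\prod_{i=1}^s \tr_{v_i}^{n_i} \;\longrightarrow\; \prod_{i=1}^s \forest^{n_i}
$$
is the iterated tensor product $\bigotimes_{i=1}^s G_{v_i}^{\otimes n_i}$, since by definition $G_{v_i} = \norm{\tr_{v_i}}_{\forest}$.

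Finally, to turn this into an element of $\Q[[\delta_T]]$ I would postcompose with the disjoint-union map $\prod_{i=1}^s \forest^{n_i}\to \forest$ and invoke the push-forward clause of Lemma \ref{relrel} to reindex. That lemma replaces each formal symbol $\delta_{F_1}\otimes\cdots\otimes\delta_{F_k}$ by $\delta_{F_1\sqcup\cdots\sqcup F_k}$, and since disjoint union of forests corresponds on isomorphism classes to multiplication of the associated monomials in the polynomial algebra $\treebialg=\Q[\delta_T]_{T\in\pi_0\tr}$, the tensor product collapses to an ordinary product, yielding
$$
\norm{\forest_N} \;=\; \prod_{i=1}^s G_{v_i}^{n_i} \;=\; G^{\fat n},
$$
which is the claim. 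There is no real obstacle here: the corollary is a purely formal consequence of Lemma \ref{famprod} together with the product and transitivity properties of relative cardinality; the only point demanding a moment of care is checking that the equivalence of Lemma \ref{famprod} is compatible with the maps to $\forest$, and this is forced by its construction.
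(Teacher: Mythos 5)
Your proof is correct and follows exactly the route the paper intends: the corollary is stated without proof precisely because it is the immediate consequence of taking relative cardinality (over $\forest$) of the equivalence in Lemma~\ref{famprod}, using the product property of relative cardinality and the identification of disjoint union of forests with multiplication of monomials in $\treebialg$. Your extra care about the compatibility of the equivalence with the maps to $\forest$ and the reindexing via Lemma~\ref{relrel} fills in details the paper leaves implicit, but the argument is the same.
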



\begin{blanko}{Proof of theorem \ref{main-thm-alg}.}
The left-hand side $\Delta(G)$ of \eqref{fdbcard} is the relative cardinality 
of the left-hand side of
\eqref{fdbgrpd} of theorem \ref{main-thm-grpd}.
It remains to show that the right-hand side of 
\eqref{fdbcard} is the relative cardinality of the right-hand side of
\eqref{fdbgrpd}. We have
$$  
\norm{\grint{N}{{\fam I}}{\forest_N\times {}_N\tr}} 
=  \sum_{N\in\pi_0 \fam I}\norm{\forest_N}
\otimes \norm{{}_N\tr}/\norm{\Aut N}  
   = \sum_{\fat n\in \N^s} G^{\fat n} \otimes g_{\fat n} .
$$
\qed
\end{blanko}

\begin{blanko}{Summands of Green functions.}
If $v\in I$ and $\fat n \in \N^s$ 
is a multi\-index parametrising an 
isoclass of an object $N\in\fam I$, define the Green
function
$$
g_{\fat n,v} := \norm{ {}_N\tr_v/\Aut N}.
$$
We have
$$
g_{\fat n} = \sum_{v\in \pi_0 I}  \frac{g_{\fat n,v}}{\norm{\Aut(v)}}
$$
and hence
$$
G = \sum_{\fat n} \sum_{v\in \pi_0 I} \frac{g_{\fat n,v}}{\norm{\Aut(v)}} .
$$
\end{blanko}

Taking relative cardinality of Corollary \ref{main-cor-grpd} 
then gives
\begin{thm}
For $v\in I$ and $\fat n \in \N^s$ we have
$$
\Delta(G_v)=\sum_{\fat n \in \N^s} G^{\fat n}\otimes g_{\fat n,v}.
$$
\end{thm}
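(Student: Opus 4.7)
The plan is to obtain this identity by taking relative cardinality of both sides of the groupoid equivalence
$$\grint{T}{\tr_v}{\cut(T)} \simeq \grint{\leaves{N}}{{\fam I}}{\forest_N \times {}_{\leaves{N}}\tr_v}$$
of Corollary~\ref{main-cor-grpd}, regarded as an equivalence over $\forest\times\tr$. This parallels exactly the passage from Theorem~\ref{main-thm-grpd} to Theorem~\ref{main-thm-alg}; the only change is that the trees appearing on the right of the tensor product are now constrained to have root colour $v$.

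For the left-hand side, I would apply Corollary~\ref{formalcardcor} to the homotopy sum $\grint{T}{\tr_v}{\cut(T)}$ with classifying map $c \mapsto (P_c,S_c)$ into $\forest\times\tr$. The resulting relative cardinality is
$$\sum_{T\in\pi_0\tr_v} \frac{1}{\norm{\Aut_v(T)}} \sum_{c\in\cut(T)} \delta_{P_c}\otimes\delta_{S_c} \;=\; \sum_{T\in\pi_0\tr_v}\frac{\Delta(\delta_T)}{\norm{\Aut_v(T)}} \;=\; \Delta(G_v),$$
by the definition of $\Delta$ on $\treebialg$ and the definition of $G_v$ as the relative cardinality of $\tr_v\into\forest$.

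For the right-hand side, the computation from the proof of Theorem~\ref{main-thm-alg} carries over verbatim with $\tr$ replaced by $\tr_v$: Lemma~\ref{famprod} identifies $\norm{\forest_N}$ with $G^{\fat n}$ for $N$ of type $\fat n$; the definition $g_{\fat n,v} := \norm{{}_N\tr_v/\Aut N}$ together with Lemma~\ref{actionformalcardinality} gives $g_{\fat n,v} = \norm{{}_N\tr_v}/\norm{\Aut N}$; and the description of $\fam I$ in~\ref{card-fam} expands the outer homotopy sum as a sum over multiindices $\fat n\in\N^s$. Assembling these factors (using that relative cardinality of a product is the tensor product of relative cardinalities) produces $\sum_{\fat n\in\N^s} G^{\fat n}\otimes g_{\fat n,v}$, which matches the right-hand side of the stated formula.

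I do not expect a genuine obstacle, since all the substantive geometric content was already established in proving Theorem~\ref{main-thm-grpd} (in particular the key pullback Lemma~\ref{tree:key}) and its colour-restricted version Corollary~\ref{main-cor-grpd}. The only bookkeeping point to watch is that the stabiliser $\Aut_v(T)$ in the definition of $G_v$ coincides with the vertex group of $T$ in the homotopy fibre $\tr_v$ of the root-colour functor, which follows directly from the explicit construction of homotopy fibres in Section~\ref{sec:groupoids}.
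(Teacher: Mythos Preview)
Your proposal is correct and follows exactly the paper's approach: the paper's proof is the single sentence ``Taking relative cardinality of Corollary~\ref{main-cor-grpd} then gives'' the theorem, and you have simply spelled out this cardinality computation on each side, mirroring the proof of Theorem~\ref{main-thm-alg} with $\tr$ replaced by $\tr_v$ and $g_{\fat n}$ by $g_{\fat n,v}$.
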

This is the version that most closely resembles the multi-variate \fdb formula
and the formula of van Suijlekom.

\section{Examples}\label{sec:ex}

In this section we specialise to some standard examples of the polynomial
endofunctor $P$, and compare with the classical \fdb bialgebra.  Following
\cite{Kock:graphs-and-trees} we also explain a polynomial endofunctor of certain
graphs which was actually our motivating example, and which points towards
transferring our results to bialgebras of graphs.

\begin{blanko}{Naked trees.}\label{naked}
  Consider the polynomial functor $P$ represented by
$$
1\leftarrow \Bij'\to \Bij\to 1,
$$
where $\Bij'$ denotes the groupoid of finite pointed sets and
basepoint-preserving bijections, and $1$ denotes a singleton set.
This is the exponential
functor $$P(X) = \exp(X) = \sum_{n\in \N} X^n / n!.$$
There is a fibre of each finite cardinality $n\in \N$, and for every
tree $A \leftarrow M \to N \to A$ there is a unique $P$-decoration
$$    \xymatrix{
  A \ar[d]& \ar[l]  M\ar[d] \drpullback \ar[r] & N\ar[d] \ar[r] 
& A\ar[d] \\
  1  &\ar[l] \Bij'\ar[r] & \Bij \ar[r]  &1 
  }$$
(since a node of arity $n$ must map to $n\in \Bij$, and since the
choices of where to map the incoming edges to the fibre over $n$
are all uniquely isomorphic).  It follows that in this case $P$-trees are 
essentially the same thing as the naked trees defined in \ref{polytree-def}
(in the precise sense that the groupoid of $P$-trees is equivalent to the 
groupoid of naked trees).
\end{blanko}

\begin{blanko}{Cyclic trees.}\label{example:cyclic}
  If $P$ is the polynomial endofunctor
$$
1\leftarrow \C'\to \C\to 1,
$$
where $\C$ is the groupoid of finite cyclically ordered sets, and
$\C'$ is the groupoid of finite cyclically ordered pointed sets (in fact,
canonically equivalent to the $\N'$ of the following example), then
the notion of $P$-tree is that of cyclic tree.
\end{blanko}

\begin{blanko}{Planar trees.}
  Consider the polynomial functor $P$ represented by
$$
1\leftarrow \N'\to \N\to 1,
$$
where $\N$ is the (discrete) groupoid of finite ordered sets, and
$\N'$ is the (discrete) groupoid of finite ordered sets with a marked point,
so that the fibre of the middle map is naturally a linearly ordered set.
This functor is the geometric series $$P(X)= \frac{1}{1-X} = \sum_{n\in \N} 
X^n.$$
In this case the $P$-trees
$$    \xymatrix{
  A \ar[d]& \ar[l]  M\ar[d] \drpullback \ar[r] & N\ar[d] \ar[r] 
& A\ar[d] \\
  1  &\ar[l] \N'\ar[r] & \N \ar[r]  &1 
  }$$
are naturally planar trees, since the cartesian square in the middle 
equips the incoming edges of each node in the tree with a linear order.

Note that the resulting bialgebra of planar trees is still commutative, unlike the
planar-tree Hopf algebra studied by Foissy~\cite{Foissy:MR2409411}
and others.  Since $P$-trees
are rigid (this is true in general when $P$ is represented by discrete 
groupoids), there are no symmetries, so the Green function is just the
sum of all the formal symbols, 
$$G = \sum_{T \in \pi_0\tr} \delta_T .
$$
\end{blanko}

\begin{blanko}{Planar binary trees.}
Consider now the diagram
$$
1\leftarrow 2\to 1\to 1
$$
representing the polynomial functor $P(X)=X^2$.  In this case
$P$-trees are planar binary trees.
\end{blanko}

\begin{blanko}{Injections.}
For the constant polynomial functor $P(X)=1$, represented by
$$
1\leftarrow 0\to 1\to 1,
$$
there are two possible $P$-trees:
$$
x\inlineDotlessTree \qquad y \inlineNullaryTree
$$
$P$-forests are disjoint unions of these.
The groupoid $\forest$ of $P$-forests is naturally equivalent to the 
groupoid whose objects are injections between finite sets, and whose 
arrows are the isomorphisms between such.
The associated \fdb bialgebra is $\Q[\delta_x,\delta_y]$, with the comultiplication given by
\begin{align*}
\Delta(\delta_x) &= \delta_x\otimes \delta_x \\
\Delta(\delta_y) &= 1 \otimes \delta_y + \delta_y\otimes \delta_x .
\end{align*}
Expanding we find
$$
\Delta(\delta_y^n) = \sum_{k\leq n} {n \choose k} \delta_y^k \otimes \delta_y
^{n-k} \delta_x^k .
$$
After passing to the reduction (putting $x=1$) we get the usual
binomial Hopf algebra. The Green function is 
$$
G = \delta_x+\delta_y ,
$$
with $g_0 = \delta_y$ and $g_1=\delta_x$, and the \fdb formula is immediate.
\end{blanko}

\begin{blanko}{Linear trees.}
  The identity functor $P(X)=X$ is represented by
$$
1\leftarrow 1\to 1\to 1.
$$
Now $P$-trees are linear trees.  We take a variable $x_n$ 
for the isoclass of the linear tree with $n$ nodes, and find the
comultiplication formula
$$
\Delta(x_n)=\sum_{i=0}^n x_i\otimes x_{n-i};
$$
this is the ladder Hopf algebra, studied for example in \cite{mekr:2004}.
\end{blanko}

\begin{blanko}{Trivial trees.}
   Consider the polynomial functor
$$
P=(I\leftarrow 0\to 0\to I).
$$
where $I$ is a discrete groupoid.
The only $P$-trees are the trivial trees, one for each $x\in \pi_0 I$.
The groupoids of $P$-trees 
and $P$-forests are $I$ and $\fam I$ respectively.
In $\Q[\pi_0 I]$ all generators are grouplike, and we have 
\begin{align*}
G&=\sum_{x\in \pi_0 I}x
\\
\Delta(G)&=\sum_{x\in \pi_0 I}x\otimes x\;\;=\;\;\sum_{x\in \pi_0 I}| I_x\times{}_xI|
\;\;=\;\;\sum_{x\in\pi_0 \fam I}|\fam I_x\times{}_xI|
\end{align*}
(This is the monoid algebra on the free commutative monoid on $\pi_0 I$.)
\end{blanko}

\begin{blanko}{Effective trees.}
Consider the polynomial functor represented by
$$
1\leftarrow \nonempt'\to \nonempt\to 1,
$$
where $\nonempt$ is the groupoid of non-empty finite sets and bijections
(and $\nonempt'$ the groupoid of non-empty finite pointed sets and 
basepoint-preserving bijections).  The resulting endofunctor is
$P(X)=\exp(X)-1$.  In this case $P$-trees are naked trees with no nullary 
operations, sometimes
called `effective' trees.  These are the key to understanding the relationship 
with the classical \fdb bialgebra, cf.~\ref{fdb}, as explained below.

Since effective trees have no nullary nodes, they always have a non-empty set of
leaves, and therefore the leaf map can be seen to take values in $\nonempt$.
Furthermore, for each $n\in \nonempt$, the homotopy fibre ${}_n\tr \subset \tr$
is discrete, since if an automorphism of a effective tree fixes the leaves then
it fixes the whole tree.

The sub-bialgebra $\efftreebialg$ of $\treebialg$ is the polynomial algebra on 
the isomorphism classes of effective trees.
\end{blanko}

\begin{blanko}{Stable trees.}
  In a similar vein, we can consider $P$-trees for the polynomial
  functor $P(X)=\exp(X)-1-X$, represented by
$$
1\leftarrow {\bf Y}'\to {\bf Y}\to 1,
$$
where $\bf Y$ is the groupoid of finite sets of cardinality at least $2$.
These are naked trees with no nullary and no unary nodes, called {\em reduced 
trees} by
Ginzburg and Kapranov~\cite{Ginzburg-Kapranov}.  We adopt instead the term {\em stable
trees}.  
Clearly stable trees are effective, so $L: \tr\to\nonempt$ is a discrete
fibration. 
In this case it is furthermore finite: for a given number of leaves
there is only a finite number of isoclasses of stable trees. 
%
%
\end{blanko}

%

\begin{blanko}{The classical \fdb: surjections versus effective trees.}
  As far as we know, the classical \fdb bialgebra of surjections is not a
  bialgebra of $P$-trees for any $P$.  There is nevertheless a close 
  relationship with the bialgebra of effective trees, which we now proceed to 
  explain.  The following construction works for any polynomial endofunctor
  without nullary operations.
  
  Since effective trees have no nullary nodes, the leaf map can be seen
  as taking values in the groupoid $\nonempt$ of non-empty finite sets.
  Pulling back along the leaf map $L:\tr\to \nonempt$ 
$$
L\upperstar :\Grpd_{/\nonempt}\to \Grpd_{/\tr},\qquad 
$$
sends $\name n:1\to\nonempt$ to the inclusion of the discrete fibre ${}_n\tr\to 
\tr$.

This yields a linear map
\begin{eqnarray*}
  \Q^{\pi_0 \nonempt} & \longrightarrow & \Q^{\pi_0 \tr}  \\
  A_n & \longmapsto & G_n
\\  
  a_n  & \longmapsto & g_n  \\
  A & \longmapsto & G .
\end{eqnarray*}
which extends to an algebra homomorphism
%
%
$$
\Phi:\fdbsymb=\Q[[A_n]]_{n\in\pi_0\nonempt}\longrightarrow
\Q[[\delta_T]]_{T\in\pi_0\tr}=\efftreebialg .
$$
\begin{lemma}\label{bialg-homo}
    The map $\Phi$ is a bialgebra homomorphism.
\end{lemma}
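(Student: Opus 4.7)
The plan is to verify the two remaining conditions for $\Phi$ to be a bialgebra homomorphism: compatibility with comultiplication and counit (algebra-compatibility is built into the definition of $\Phi$).

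The counit check is immediate. On the surjection side $\varepsilon(a_n) = \delta_{n,1}$, while on the tree side $\varepsilon(g_n) = \sum_T \varepsilon(\delta_T)/\norm{\Aut(T)}$ summed over effective trees with $n$ leaves, where $\varepsilon(\delta_T)$ vanishes unless $T$ is the trivial tree $\inlineDotlessTree$. Since $\inlineDotlessTree$ has exactly one leaf, we get $\varepsilon(g_1) = 1$ and $\varepsilon(g_n) = 0$ for $n \geq 2$, so $\varepsilon \circ \Phi = \varepsilon$ on generators and hence everywhere.

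The comultiplication compatibility is the substance of the proof. The strategy is to play the classical generating-series identity $\Delta(A) = \sum_{k\geq 1} A^k \otimes a_k$ (Proposition~\ref{fdb-clas}) against its tree analogue $\Delta(G) = \sum_{n\geq 1} G^n \otimes g_n$, which is exactly Theorem~\ref{main-thm-alg} specialised to the polynomial endofunctor $P(X) = \exp(X)-1$ representing effective trees. Since $\Phi(A) = \sum_n \Phi(a_n) = \sum_n g_n = G$, applying $\Phi \otimes \Phi$ to the classical identity and using multiplicativity yields
\[
(\Phi\otimes\Phi)\Delta(A) \;=\; \sum_{k\geq 1} G^k \otimes g_k \;=\; \Delta(G) \;=\; \Delta(\Phi(A)).
\]

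It remains to extract from this generating-series identity the componentwise identity $(\Phi\otimes\Phi)\Delta(a_n) = \Delta(g_n)$ for each $n \geq 1$. For this I would equip both bialgebras with compatible gradings: $\deg(a_n) = n-1$ on $\fdbsymb$, and on the tree side $\deg(\delta_T) = \norm{L(T)} - 1$ (extended to forests by $\deg(\delta_F) = \norm{L(F)} - \norm{R(F)}$). A direct check shows that the tree comultiplication respects this grading --- for a cut $c:S\subset T$ with $\norm{L(S)} = k$ we have $\deg(\delta_{P_c}) + \deg(\delta_{S_c}) = (\norm{L(T)} - k) + (k - 1) = \norm{L(T)} - 1$ --- and $\Phi$ is graded since $a_n$ and $g_n$ both lie in degree $n-1$. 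Extracting the degree-$(n-1)$ homogeneous component of the displayed identity then yields the required componentwise comultiplication compatibility. The main subtlety is setting up the grading on the tree side correctly and checking that it is preserved by $\Delta$; once this is done, the proof reduces to a one-line comparison of the classical and tree Fa\`a di Bruno generating-series identities.
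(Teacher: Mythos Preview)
Your proof is correct and follows essentially the same route as the paper: both compare the classical \fdb identity $\Delta(A)=\sum_k A^k\otimes a_k$ against its tree analogue $\Delta(G)=\sum_n G^n\otimes g_n$ via $\Phi$, then extract the generator-wise compatibility. The paper condenses your grading-extraction step into the single remark ``the comultiplication in $\fdbsymb$ is determined by the comultiplication of the Green function'' and omits the explicit counit check you supplied, but the argument is the same.
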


\begin{dem}
    We already noted that $\Phi$ preserves the Green functions.
    Now 
    \begin{eqnarray*}
    (\Phi\otimes\Phi)( \Delta(A)) &=& (\Phi\otimes\Phi)( \sum_n A^n 
    \otimes a_n) \\ &=&
    \sum_n (\Phi A)^n \otimes \Phi(a_n) \\ &=& \sum_n G^n \otimes g_n 
    \\
    &=& 
    \Delta(\Phi(A)).
    \end{eqnarray*}
    It remains to recall that the comultiplication in $\fdbsymb$ is 
    determined by the comultiplication of the Green function.
\end{dem}


\end{blanko}


\begin{blanko}{Trees of graphs.}\label{trees-of-graphs}
  This final example is our main motivation for studying $P$-trees: forthcoming
  work of the second author~\cite{Kock:graphs-and-trees} shows that (nestings
  of) Feynman graphs for a given quantum field theory can be considered as
  $P$-trees for a suitable finitary polynomial endofunctor $P$ defined over
  groupoids.  We briefly describe this polynomial endofunctor and its relation
  with graphs.
   
  The relationship between trees and graphs in the Connes-Kreimer Hopf algebras   is that the trees encode nestings of
  graphs.  In the following figure,
  \begin{center}
  \begin{texdraw}
  \bsegment
	\linewd 1
      \move (-3 0) \lvec (20 0) \Onedot \lvec (68 40)
      \move (56 30) \Onedot \lvec (56 -30) \Onedot
      \move (32 10) \Onedot \lvec (32 -10) \Onedot
      \move (20 0) \lvec (68 -40)

      \move (56 9) \Onedot
      \move (56 -9) \Onedot
      \move (56 0) \larc r:9 sd:-90 ed:90

      \red
      \lpatt (1 3)
      \move (28 -1) \freeEllipsis{13}{20}{0}
      \move (58 0) \freeEllipsis{11}{16}{0}
      \move (43 0) \freeEllipsis{35}{41}{0}

  \esegment

  \move(150 -20)
  \bsegment
  	\linewd 1
  \move (0 0) \Onedot \lvec (-12 20) \Onedot
  \move (0 0) \lvec (12 20) \Onedot
  \esegment
  
\move (260 -35)

  \bsegment
  
    \move (0 0) 
    \lvec (0 20) \Onedot \lvec (-4 64) 
    \move (0 20) \lvec (6 62)
    \move (0 20) \linewd 1 
    \lvec (25 42) \linewd 0.5
    \Onedot \lvec (20 60) \move (25 42) \lvec (30 60)

    \move (0 20) \linewd 1 \lvec (-25 40) \linewd 0.5
 
    \bsegment 
      \move (0 0) \Onedot \lvec (-16 25) \move (0 0) \lvec (-5 30)
      \move (0 0) \lvec (6 29)
    \esegment

\move (6 17) \trekant 
\setunitscale{0.8} \rmove (12 0) \smalldot \setunitscale{1}
\move (-44 33) \trekant
\move (33 38) \tokant

\htext (-4 3){{\footnotesize $3$}}
\htext (-16 25){{\footnotesize $3$}}
\htext (-4 70){{\footnotesize $3$}}
\htext (7 67){{\footnotesize $3$}}

\htext (-42 70){{\footnotesize $3$}}
\htext (-30 75){{\footnotesize $3$}}
\htext (-18 74){{\footnotesize $3$}}

\htext (20 65){{\footnotesize $3$}}
\htext (30 65){{\footnotesize $3$}}
\htext (12 38){{\footnotesize $2$}}

\move (52 11)
\bsegment
\htext (0 0){{\footnotesize $2$ :}} \move (10 0) \tovert
\htext (0 -12){{\footnotesize $3$ :}} \move (10 -12) \trevert
\esegment
\esegment

\end{texdraw}
\end{center}
the small combinatorial tree in the middle expresses the nesting of
1PI subgraphs on the left. It is clear that such combinatorial trees
do not capture anything related to symmetries of graphs.
For this, fancier trees are needed,
as partially indicated on the right.
First of all, each node in the tree should be decorated by the 1PI graph
it corresponds to in the nesting~\cite{Bergbauer-Kreimer:0506190}, and second,
to allow an operadic interpretation,
the tree should have leaves (input
slots) corresponding to the vertices of the graph. 
Just as vertices of graphs serve as insertion points, the leaves of
a tree serve as input slots for grafting.

The decorated tree should be 
regarded as a
recipe for reconstructing the graph by inserting the decorating graphs into
the vertices of the graphs of parent nodes.
The numbers on the edges
indicate the  type constraint of each substitution: the outer interface of
a graph must match the local interface of the vertex it is substituted into.
But the type constraints on the tree decoration are not enough to reconstruct the
graph, because for example the small graph 
\raisebox{-5pt}{\begin{texdraw}\trekant\end{texdraw}} decorating
the left-hand node could be substituted into various different vertices of the 
graph
\raisebox{-5pt}{\begin{texdraw}
  \trekant  \setunitscale{0.8} \rmove (12 0) \smalldot \setunitscale{1}
\end{texdraw}}.
  The solution found in \cite{Kock:graphs-and-trees}, which 
  draws from insights
  from higher category theory~\cite{Kock-Joyal-Batanin-Mascari:0706},
  is to consider $P$-trees,
  for $P$ a certain polynomial endofunctor over groupoids, which depends on the
  theory.

  To match the figures above, we consider a theory in which there are
  two interaction labels \raisebox{2pt}{\begin{texdraw} \tovert \end{texdraw}} and
  \raisebox{-1pt}{\begin{texdraw} \trevert \end{texdraw}}\ ; let $I$ denote
  the groupoid of all such one-vertex graphs.  Let $B$ denote the groupoid
  of all connected 1PI graphs of the theory that are primitive in the Hopf 
  algebra of graphs.  Finally let $E$ denote the groupoid of such graphs with a marked
  vertex.  The polynomial endofunctor $P$ is now given by the diagram
\begin{equation*}
\xymatrix{
    I & \ar[l]_s  E  \ar[r]^p & B  \ar[r]^t & I ,
}
\end{equation*}
  where the map $s$ returns the one-vertex subgraph at
  the mark, $p$ forgets the mark, and $t$ returns the residue of the
  graph, i.e.~the graph obtained by contracting everything to a point, but
  keeping the external lines.  A $P$-tree is hence a diagram 
    \begin{equation*}\label{qtree}
  \xymatrix @!C=16pt {
  A\ar[d]_\alpha \ar@{}[dr]|{\Leftarrow}& M\ar[l]  
  \ar@{}[dr]|{\Rightarrow}\drpullback\ar[r] \ar[d]& N \ar[d] \ar[r] \ar@{}[dr]|{\Rightarrow}
  &A\ar[d]^\alpha \\
  I & \ar[l] E  \ar[r] & B \ar[r]&I \,, \\
  }\end{equation*} with specified $2$-cells, in which the first row is a tree in
  the sense of~\ref{polytree-def}.  These $2$-cells carry much of the structure:
  for example the $2$-cell on the right says that the 1PI graph decorating a
  given node must have the same residue as the decoration of the outgoing edge
  of the node --- or more precisely, and more realistically: an isomorphism is
  specified (it's a bijection between external lines of one-vertex graphs).
  Similarly, the left-hand $2$-cell specifies for each
  node-with-a-marked-incoming-edge $x'\in M$, an isomorphism between the
  one-vertex graph decorating that edge and the marked vertex of the graph
  decorating the marked node $x'$.  Hence the structure of a $P$-tree is a
  complete recipe not only for which graphs should be substituted into which
  vertices, but also {\em how}: specific bijections prescribe which external
  lines should be identified with which lines in the receiving graph.  
  
  More precisely, the result of \cite{Kock:graphs-and-trees} states an
  equivalence of groupoids.  In particular a $P$-tree has the same symmetry
  group as the graph (with its nesting) that it encodes, so that the Green
  functions match up, and in the end the \fdb formula in the bialgebra of
  $P$-trees can be transported to a certain bialgebra of graphs.
  However, this bialgebra of graphs is not quite the same as the standard
  Connes--Kreimer Hopf algebra of graphs, and we are not yet able to derive van
  Suijlekom's \fdb formula from our general framework.  The main dificulty lies
  in getting a purely operadic encoding of the line insertions, allowed in the
  Connes--Kreimer Hopf algebra but not in our bialgebra of graphs.  This issue is
  closely related to the renormalisation factors $1/\sqrt{G_e}$ 
  mentioned in the Introduction.

%
\end{blanko}

\bibliographystyle{plain}
\bibliography{GKT1}

\end{document}